\let\oldmarginpar\marginpar
\renewcommand\marginpar[1]{\-\oldmarginpar[\raggedleft\footnotesize #1]%
{\raggedright\footnotesize #1}}
\theoremstyle{plain}
\newtheorem{thm}[equation]{Theorem}
\newtheorem{lem}[equation]{Lemma}
\newtheorem{prop}[equation]{Proposition}
\newtheorem{cor}[equation]{Corollary}
\theoremstyle{definition}
\newtheorem{defn}[equation]{Definition}
\newtheorem{assumptions}[equation]{Assumption}
\newtheorem{eg}[equation]{Example}
\theoremstyle{remark}
\numberwithin{equation}{section}
\newcommand{\R}{\mathbb{R}}
\newcommand{\N}{\mathbb{N}}
\newcommand{\Rn}{{\mathbb{R}^n}}
\renewcommand{\phi}{\varphi}
\def\le{\leqslant}
\def\leq{\leqslant}
\def\ge{\geqslant}
\def\geq{\geqslant}
\def\phi{\varphi}
\def\rho{\varrho}
\def\vartheta{\theta}
\newcommand{\Phiw}{\Phi_{\text{\rm w}}}
\newcommand{\Phic}{\Phi_{\text{\rm c}}}
\def\esssup{\operatornamewithlimits{ess\,sup}}
\def\diam{\qopname\relax o{diam}}
\def\dist{\qopname\relax o{dist}}
\DeclareMathOperator{\divop}{div}
\renewcommand{\div}{\divop}
\def\loc{{\rm loc}}
\newcommand{\inc}[1]{\hyperref[def:aInc]{{\normalfont(Inc){\ensuremath{_{#1}}}}}}
\newcommand{\dec}[1]{\hyperref[def:aDec]{{\normalfont(Dec){\ensuremath{_{#1}}}}}}
\newcommand{\ainc}[1]{\hyperref[def:aInc]{{\normalfont(aInc){\ensuremath{_{#1}}}}}}
\newcommand{\ainci}[1]{\hyperref[def:aInc]{{\normalfont(aInc){\ensuremath{_{#1}^\infty}}}}}
\newcommand{\adec}[1]{\hyperref[def:aDec]{{\normalfont(aDec){\ensuremath{_{#1}}}}}}
\newcommand{\adeci}[1]{\hyperref[def:aDec]{{\normalfont(aDec){\ensuremath{_{#1}^\infty}}}}}
\newcommand{\azero}{\hyperref[def:a0]{{\normalfont(A0)}}}
\newcommand{\aone}{\hyperref[def:a1]{{\normalfont(A1)}}}
\newcommand{\aones}[1]{\hyperref[def:a1s]{{\normalfont(A1-{\ensuremath{{#1}})}}}}
\newcommand{\atwo}{\hyperref[def:a2]{{\normalfont(A2)}}}
\date{\today}
\begin{document}

\title{Stability of solutions to obstacle problems with generalized Orlicz growth}
\author{Petteri Harjulehto and Arttu Karppinen}
\thanks{A. Karppinen is supported by NCN grant no. 2019/34/E/ST1/00120.}

\begin{abstract}
We consider nonlinear equations having generalized Orlicz growth (also known as Musielak--Orlicz growth). We prove that if differential operators $\mathcal{A}_i$ converge locally uniformly to an operator $\mathcal{A}$, then the sequence of solutions $(u_i)$ has a subsequence converging to solution $u$ of the limit operator in Sobolev and H\"older norms.
\end{abstract}

\keywords{generalized Orlicz space, Musielak--Orlicz space, $p$-Laplace, boundary value problem, obstacle problem}
\subjclass[2020]{35J60; 35B20, 35J25}

\maketitle

\section{Introduction}

\medskip
In this article we study structural perturbations to a second order nonlinear non-uniformly elliptic partial differential equation 
\begin{align*}
-\div \mathcal{A}(x,\nabla u) = 0.
\end{align*}
Here $\mathcal{A}$ is controlled by a generalized Orlicz function $\phi$, see Assumptions~\ref{ass:main} and Definition~\ref{def2-1}. There is a vast literature on well-posedness of (partial) differential equations and the main principes are that the solution should 1) exist 2) be unique 3) depend continuously on the given data. In our setting, as stated in Proposition \ref{prop:existence}, properties 1) and 2) are known to be true with natural assumptions. For related results, see e.g. \cite{ChlK21,CreGHW21,GwiSW10}. Often the third condition is considered as regarding perturbations of the given boundary values, right hand side of the equation or an obstacle. To best of our knowledge, this property is largely unexplored with generalized Orlicz growth conditions, but convergence results related to the obstacle have been obtained in \cite{FutS21} for metric measure spaces.

Instead of uncertainty in the given data, we aim to study perturbations in the structural properties of the equation itself. More concretely, we consider a sequence of operators $\mathcal{A}_i$ controlled by distinct functions $\phi_i$ and show that under suitable assumptions, the sequence of solutions converge to the solution of the limit equation. These type of perturbations become relevant for example when these equations are applied to physical modelling. For instance, in \cite{DieHHR11,GwiSW10} non-Newtonian fluids have a stress tensor or a pressure term governed by a function belonging to a generalized Orlicz space. In principle, it is not possible to measure the properties of these fluids arbitrarily well, so the resulting equation itself is only an approximation. Our aim is to show, that small perturbations in the equations result in only small changes to the solution.

Equations controlled by generalized Orlicz functions have had wide attraction lately both in applications and in regularity theory. For a general overview, we point the reader to \cite{Chl18}.
 One of the appeals to study problems in generalized Orlicz spaces is to unify theory of many distinctly studied frameworks, such as polynomial, variable exponent, double phase, Orlicz and many others, see for example \cite{BaaB22,ByuL21,DeFM19,EleHL11,EleHL13,Lie91,RagT20}. 
 As already mentioned, existence and uniqueness have been studied widely in generalized Orlicz spaces, but properties of solutions are vigorously studied, too \cite{BenK20,ChlK21,ChlZ21,HasO_pp18,KarL21}. 

Study of stability with respect to equation goes back to Linqvist \cite{Lin87} in the classical $p$-Laplacian case. Afterwards, Li and Martio extended the results and introduced a possibility of an obstacle function~$\psi$ \cite{LiM98}. See also a related recent study of minimizers in metric measure spaces \cite{NasC22}. These results have been generalized to a variable exponent situation and the overarching methods are robust enough to allow for more general growth conditions. To best of our knowledge, these results are already new in Orlicz, double phase and other special cases such as Orlicz double phase, variable exponent Orlicz etc.

We show two convergence results of solutions, first in a Sobolev space and after that in a H\"older space.

\begin{thm}\label{thm:equation-stability}
Let $\Omega$ be a bounded domain, which satisfies measure density condition, Definition~\ref{defn:measure-density}, let Assumptions \ref{ass:main} hold,
and suppose the boundary value function $f$ and the obstacle function $\psi$ belong to $W^{1, \phi^{1+s}}(\Omega)$ for some $s >0$. Then there exists $\delta_0>0$ such that for every ${\delta < \delta_0}$ sequence $(u_i)$ of 
solutions to the $(\mathcal{A}_i, \mathcal{K}_\psi^{f,\phi_i}(\Omega))$-obstacle problem
 has a subsequence which converges in $W^{1, \phi^{1+\delta}}(\Omega)$ to the solution $u$ to the $(\mathcal{A}, \mathcal{K}_\psi^{f,\phi}(\Omega))$-obstacle problem.
\end{thm}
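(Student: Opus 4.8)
The plan is to use the standard three-step compactness strategy for structural stability, adapted to the generalized Orlicz setting. First I would establish a uniform a priori bound: since each $u_i$ solves the $(\mathcal{A}_i,\mathcal{K}_\psi^{f,\phi_i}(\Omega))$-obstacle problem, testing the variational inequality with the admissible competitor $f$ (or $\max\{f,\psi\}$), together with the ellipticity and growth conditions in Assumptions~\ref{ass:main} and the fact that $\phi_i\to\phi$ uniformly, gives a bound for $\|\nabla u_i\|_{L^{\phi_i}(\Omega)}$, and hence — because $\phi_i$ and $\phi$ are comparable for large $i$ and $f,\psi\in W^{1,\phi^{1+s}}(\Omega)$ — a uniform bound for $\|u_i\|_{W^{1,\phi^{1+\delta}}(\Omega)}$ once $\delta<\delta_0\le s$. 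The higher-integrability exponent $1+s$ in the hypothesis is exactly what buys room to pass from $\phi_i$ to the common target $\phi^{1+\delta}$; choosing $\delta_0$ small enough (depending on $s$, the ellipticity constants and the $\azero$/$\ainc{}$/$\adec{}$ parameters) makes all the embeddings and comparisons work.

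Second, reflexivity of $W^{1,\phi^{1+\delta}}(\Omega)$ (a consequence of the growth assumptions, via the theory recalled earlier) yields a subsequence with $u_i\rightharpoonup u$ weakly in $W^{1,\phi^{1+\delta}}$ and, by compact Sobolev embedding on a domain with the measure density condition (Definition~\ref{defn:measure-density}), $u_i\to u$ strongly in $L^{\phi^{1+\delta}}$ and a.e. One then checks $u$ is admissible: it has the right boundary values (the constraint $u_i-f\in W^{1,\phi_i^{1+?}}_0$ passes to the weak limit) and $u\ge\psi$ a.e. (preserved under a.e. convergence). Third, I would upgrade weak convergence of the gradients to strong convergence by a Minty/monotonicity argument: plug $u$ as a competitor in the $i$-th inequality and $u_i$ (after a small correction to restore admissibility for the limit problem) in a comparison, subtract, and use that $\mathcal{A}_i\to\mathcal{A}$ locally uniformly to replace $\mathcal{A}_i(x,\nabla u_i)$ by $\mathcal{A}(x,\nabla u_i)$ up to an error tending to $0$; the strict monotonicity of $\mathcal{A}$ then forces $\int (\mathcal{A}(x,\nabla u_i)-\mathcal{A}(x,\nabla u))\cdot(\nabla u_i-\nabla u)\to 0$, which in the generalized Orlicz framework gives $\nabla u_i\to\nabla u$ in measure and, combined with the uniform higher integrability from Step~1, in $L^{\phi^{1+\delta}}$. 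Passing to the limit in the inequality then identifies $u$ as the solution of the $(\mathcal{A},\mathcal{K}_\psi^{f,\phi}(\Omega))$-obstacle problem, which is unique by Proposition~\ref{prop:existence}, so in fact the whole sequence (not just the subsequence) could be discussed, though the statement only claims a subsequence.

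The main obstacle I anticipate is Step~3: handling the mismatch between the energy spaces $L^{\phi_i}$ and $L^{\phi}$ simultaneously with the obstacle constraint. Unlike the unconstrained equation, one cannot freely use $u-u_i$ as a test function, so the admissible competitors must be built carefully (e.g. $u_i+(u-u_i)^- $ type truncations, or $\min\{u_i,u\}$ / $\max\{u_i,u\}$ combinations) so that they lie in $\mathcal{K}_\psi^{f,\phi_i}$ respectively $\mathcal{K}_\psi^{f,\phi}$; then one must show the correction terms vanish in the limit using the strong $L^{\phi^{1+\delta}}$ convergence of $u_i$ and the uniform gradient bound. Quantifying ``locally uniformly'' for $\mathcal{A}_i\to\mathcal{A}$ against the growth of $\phi_i$ — i.e. controlling $\int_\Omega |\mathcal{A}_i(x,\nabla u_i)-\mathcal{A}(x,\nabla u_i)|\,|\nabla(\text{test function})|\,dx$ — is the technically delicate point, and this is precisely where the extra integrability exponent $s$ and the smallness of $\delta_0$ are consumed.
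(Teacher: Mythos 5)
The overall three-step compactness skeleton you sketch (uniform a priori bound, weak compactness plus a.e.\ convergence, Minty-type identification of the limit) matches the paper's structure. But there is a genuine gap in Step~1 that would make the argument collapse: you treat the passage from the $L^{\phi_i}$-bound on $\nabla u_i$ to a $W^{1,\phi^{1+\delta}}$-bound as a matter of ``$\phi_i$ and $\phi$ being comparable for large $i$''. They are not comparable. Assumption~6 only gives one-sided dominations with a strictly worse exponent: $\phi_i \lesssim \phi^{1+\theta}$ and, crucially, $\phi \lesssim \phi_i^{1+\theta}$. So starting from $\int_\Omega \phi_i(x,|\nabla u_i|)\,dx \leq C$ (which testing with $f$ does give), the most you can conclude is
\[
\int_\Omega \phi(x,|\nabla u_i|)\,dx \;\lesssim\; \int_\Omega \phi_i(x,|\nabla u_i|)^{1+\theta}\,dx + |\Omega|,
\]
and the right-hand side is not controlled by the $\phi_i$-energy alone. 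The missing ingredient is a \emph{global higher integrability (Gehring/reverse H\"older) theorem for the solution's gradient}: one must first prove, via interior and boundary Caccioppoli inequalities for the obstacle problem and the measure density condition, that $\phi_i(x,|\nabla u_i|)\in L^{1+\gamma}(\Omega)$ with $\gamma>0$ independent of $i$ (this is Proposition~\ref{pro:higher-integrability} in the paper). Only then does the domination $\phi^{1+\gamma/4}\lesssim 1+\phi_i^{1+\gamma/2}$ transfer the bound into the common target space $W^{1,\phi^{1+\gamma/4}}(\Omega)$. The hypothesis $f,\psi\in W^{1,\phi^{1+s}}$ is used inside that higher integrability theorem (so the Caccioppoli right-hand side has some slack), not as a direct device for comparing $L^{\phi_i}$ to $L^{\phi^{1+\delta}}$. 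As written, your proposal never invokes the Caccioppoli inequalities or the higher integrability of $\nabla u_i$, so $\delta_0$ is never actually produced.

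Two smaller remarks. In Step~3 you correctly flag that $u$ need not be admissible for the $i$-th problem (since $u-f\in W^{1,\phi}_0$ does not give $u-f\in W^{1,\phi_i}_0$); the paper resolves this by first passing to the limit inequality against a fixed competitor $w\in\mathcal{K}_\psi^{f,\phi^{1+\gamma}}(\Omega)$ via Vitali's theorem, and then approximating $u$ by $v_j=\max\{\eta_j,\psi-f\}+f$ with smooth $\eta_j$. A plain $\min\{u_i,u\}$/$\max\{u_i,u\}$ truncation would not obviously land in $\mathcal{K}_\psi^{f,\phi_i}$ for the same reason, so your anticipated ``small correction'' needs exactly this smoothing step. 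Also, your a.e.\ gradient convergence requires not only that the monotonicity integral goes to zero but a pointwise contradiction argument to rule out $|\nabla u_i(x_0)|\to\infty$ using the growth conditions and \ainc{}; that piece is easy but should not be elided.
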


\begin{cor}\label{cor:holder}
Under the assumptions of Theorem~\ref{thm:equation-stability}  and H\"older continuity of the obstacle $\psi$, $(u_i)$ has a subsequence that converges
in $C^{0, \alpha}_{\loc}(\Omega)$, for some $\alpha>0$,  to the solution the $u$ of the $(\mathcal{A},\mathcal{K}_\psi^{f,\phi}(\Omega))$-obstacle problem.
\end{cor}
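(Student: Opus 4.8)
The plan is to deduce Corollary~\ref{cor:holder} from Theorem~\ref{thm:equation-stability} by combining the $W^{1,\phi^{1+\delta}}$-convergence of the (sub)sequence with a uniform local H\"older estimate for solutions to the obstacle problems. First I would pass to the subsequence $(u_i)$ furnished by Theorem~\ref{thm:equation-stability}, so that $u_i \to u$ in $W^{1,\phi^{1+\delta}}(\Omega)$. The key external input is a De Giorgi--Nash--Moser type regularity result for solutions of the $(\mathcal{A}_i,\mathcal{K}_\psi^{f,\phi_i}(\Omega))$-obstacle problem: under \ref{ass:main} (which include the standard structural conditions \azero, \aone, \ainc{} and \adec{} on the $\phi_i$, uniformly in $i$) and H\"older continuity of the obstacle $\psi$, each $u_i$ is locally H\"older continuous, and — crucially — the H\"older exponent $\alpha \in (0,1)$ and the local seminorm bound $[u_i]_{C^{0,\alpha}(\Omega')} \le C(\Omega')$ can be chosen \emph{independently of $i$}. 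This uniformity is what makes the argument work: it hinges on the fact that all the ingredients of the regularity proof (Caccioppoli inequalities, Sobolev--Poincar\'e embeddings, the constants in \azero--\adec) are controlled by the common bounds on the $\phi_i$ coming from the locally uniform convergence $\phi_i \to \phi$ and Assumptions~\ref{ass:main}.

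Granting this, the conclusion follows from a compactness argument. Fix an open set $\Omega' \Subset \Omega$. The family $\{u_i\}$ is bounded in $C^{0,\alpha}(\overline{\Omega'})$: equicontinuity comes from the uniform H\"older seminorm, and uniform boundedness comes from, say, the $L^\infty_{\loc}$ bound that accompanies the De Giorgi estimate (or simply from $W^{1,\phi^{1+\delta}}$-convergence plus Sobolev embedding). By the Arzel\`a--Ascoli theorem, a further subsequence converges uniformly on $\overline{\Omega'}$, and a standard interpolation shows the limit inherits a $C^{0,\beta}(\overline{\Omega'})$ bound for every $\beta < \alpha$ (in fact for $\beta = \alpha$ by lower semicontinuity of the seminorm under uniform convergence). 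Exhausting $\Omega$ by an increasing sequence $\Omega'_k \Subset \Omega$ and taking a diagonal subsequence yields a subsequence converging in $C^{0,\beta}_{\loc}(\Omega)$ for any fixed $\beta<\alpha$; relabel $\alpha$. Finally, the uniform limit must coincide with the $u$ from Theorem~\ref{thm:equation-stability}, since $W^{1,\phi^{1+\delta}}$-convergence forces convergence in $L^1_{\loc}$, and an $L^1_{\loc}$ limit is unique; hence the whole $C^{0,\alpha}_{\loc}$-convergent subsequence converges to $u$, which we already know solves the $(\mathcal{A},\mathcal{K}_\psi^{f,\phi}(\Omega))$-obstacle problem.

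The main obstacle is establishing the \emph{$i$-uniform} local H\"older estimate. If a reference (e.g.\ \cite{ChlK21, HasO_pp18, BenK20} or an analogous De Giorgi/Harnack result for obstacle problems with generalized Orlicz growth) gives H\"older continuity with constants depending only on the structural data $n$, $\Omega'$, $\|\psi\|_{C^{0,\alpha}}$, and the \azero--\adec-parameters of $\phi$, then one must check that along the sequence these parameters can be taken uniform — which is precisely guaranteed by the locally uniform convergence $\phi_i\to\phi$ together with Assumptions~\ref{ass:main} (the same uniform control already used in the proof of Theorem~\ref{thm:equation-stability}). A secondary technical point is handling the obstacle: one uses that an obstacle-problem solution is a supersolution globally and a (free) solution in the open set $\{u_i > \psi\}$, and the H\"older continuity of $\psi$ transfers regularity across the contact set in the usual way; this is standard once the uniform estimates are in place. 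Everything else — Arzel\`a--Ascoli, the diagonal extraction, and identification of the limit — is routine.
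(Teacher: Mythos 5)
Your proposal follows essentially the same route as the paper's proof: establish an $i$-uniform local H\"older estimate for the solutions (the paper invokes \cite[Theorem 2.6]{KarL21} precisely for this, after first normalizing to non-negative functions by adding a constant, with H\"older constants depending only on $n$, \ainc{p}, \adec{q}, \azero{}, \aone{} and the $L^\infty_{\loc}$-norm, all uniform in $i$) and then conclude by compactness in H\"older spaces via the compact embedding $C^{0,\kappa}(D)\hookrightarrow C^{0,\alpha}(D)$. The one caveat is that your parenthetical fallback for the uniform $L^\infty_{\loc}$ bound via Sobolev embedding does not apply here, since \adec{q} with $q<n$ places the setting below the critical embedding into $L^\infty$; the paper instead derives this bound from the local boundedness estimate of \cite[Proposition 5.3]{Kar21} combined with Claim~2 of the proof of Theorem~\ref{thm:equation-stability}, which is what your primary suggestion (the $L^\infty_{\loc}$ bound accompanying the De Giorgi estimate) amounts to.
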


Let us mention a few key points of our proof. The higher integrability results of the gradient of a solution is the foundation of this method. In the variational integral setting, they are developed in \cite{Kar21,HarHK17}. Thus it is enough to show that also solutions satisfy the required Caccioppoli inequalities to obtain that $\phi_i(x,|\nabla u_i|) \in L^{1+\gamma}(\Omega)$, where $\gamma$ is independent of $i$.  As we do not assume any type of monotonicity of the functions $(\phi_i)$, the underlying Sobolev space is not a priori fixed. However, from higher integrability, we find a common Sobolev space for the tail of the sequence $(u_i)$. In this space we use the utilize the functional analytic properties, namely reflexivity, to find a function to which a subsequence of $(u_i)$ converges. Then we show that this function is in fact the solution to the limit equation.
In a similar manner, for Corollary~\ref{cor:holder} the H\"older regularity results of solutions are the corner stone  of the proof.

\section{Properties of generalized $\Phi$-functions}

Throughout the paper we always consider a 
\textbf{bounded domain $\Omega \subset \Rn$}, i.e.\ a bounded, open and connected 
set. By $p':=\frac p {p-1}$ we denote the H\"older conjugate exponent 
of $p\in [1,\infty]$. The notation $f\lesssim g$ means that there exists a constant
$c>0$ such that $f\le c g$. The notation $f\approx g$ means that
$f\lesssim g\lesssim f$. 
By $c$ we denote a generic constant whose
value may change between appearances.
A function $f$ is \textit{almost increasing} if there
exists $L \ge 1$ such that $f(s) \le L f(t)$ for all $s \le t$
(more precisely, $L$-almost increasing).
\textit{Almost decreasing} is defined analogously.
By \textit{increasing} we mean that this inequality holds for $L=1$ 
(some call this non-decreasing), similarly for \textit{decreasing}. By $\rightharpoonup$ we denote the weak convergence.

By $C^{0,\alpha}_\loc(\Omega)$ we mean a space of locally H\"older continuous functions. In a compact subset $K\subset \Omega$ we define its norm as
\begin{align*}
\|f\|_{C^{0,\alpha}(K)} = \sup_{x \in K} |f(x)| + \sup_{\substack{x \not =y \\ x,y \in K}} \dfrac{|f(x)-f(y)|}{|x-y|^{\alpha}}.
\end{align*}

\subsection{Generalized $\Phi$-functions}

\begin{defn}
\label{def2-1}
We say that $\phi: \Omega\times [0, \infty) \to [0, \infty]$ is a 
\textit{weak $\Phi$-function}, and write $\phi \in \Phiw(\Omega)$, if 
the following conditions hold for almost every $x\in\Omega$:
\begin{itemize}
\item 
for every measurable function $f:\Omega\to \R$ the function $y \mapsto \phi(y, f(y))$ is measurable;
\item
the function $t \mapsto \phi(x, t)$ is non-decreasing;
\item 
$\displaystyle \phi(x, 0) = \lim_{t \to 0^+} \phi(x,t) =0$ and $\displaystyle \lim_{t \to \infty}\phi(x,t)=\infty$;
\item 
the function $t \mapsto \frac{\phi(x, t)}t$ is $L$-almost increasing on $(0,\infty)$ with $L$ independent of $x$.
\end{itemize}
If $\phi\in\Phiw(\Omega)$ is additionally convex and left-continuous with respect to $t$ for almost every $x$, then $\phi$ is a 
\textit{convex $\Phi$-function}, and we write $\phi \in \Phic(\Omega)$. 
\end{defn}

Let us consider $\phi \in \Phiw(\Omega)$. By $\phi^{-1}$ we mean a generalized inverse, defined by
\begin{align*}
\phi^{-1}(x,\tau) = \inf \{t \geq 0 : \phi(x,t) \geq \tau\}.
\end{align*}
We say that $\phi$ satisfies 
\begin{itemize}
\item[(A0)]\label{def:a0}
if there exists $\beta \in(0, 1]$ such that $ \beta \le \phi^{-1}(x,
1) \le \frac1{\beta}$ for a.e.\ $x \in \Omega$, or equivalently there exists $\beta \in(0,1]$ such that 
$\phi(x,\beta) \le 1\le \phi(x,1/\beta)$ for almost every $x \in \Omega$;
\item[(A1)]\label{def:a1}
if there exists $\beta\in (0,1)$ such that,
for every ball $B$ and a.e.\ $x,y\in B \cap \Omega$,
\[
\beta \phi^{-1}(x, t) \le \phi^{-1} (y, t) 
\quad\text{when}\quad 
t \in \bigg[1, \frac{1}{|B|}\bigg];
\]
\item[(A2)]\label{def:a2}
if there exists $\phi_\infty \in \Phiw$, $h \in L^1(\Omega)\cap L^\infty(\Omega)$, $\beta \in(0, 1]$ and $s>0$ such that
\[
\phi(x, \beta t) \le \phi_\infty(t) + h(x) \quad \text{ and } \quad \phi_\infty(\beta t) \le \phi(x, t) + h(x) 
\] 
for almost every $x \in \Omega$ when $\phi_\infty(t) \in [0, s]$ and $\phi(x, t) \in [0,s]$, respectively.
\item[(aInc)$_p$] \label{def:aInc} if
$t \mapsto \frac{\phi(x,t)}{t^{p}}$ is $L_p$-almost 
increasing in $(0,\infty)$ for some $L_p\ge 1$ and a.e.\ $x\in\Omega$;
\item[(aDec)$_q$] \label{def:aDec}
if
$t \mapsto \frac{\phi(x,t)}{t^{q}}$ is $L_q$-almost 
decreasing in $(0,\infty)$ for some $L_q\ge 1$ and a.e.\ $x\in\Omega$.
\end{itemize} 

We write \ainc{} in the case there exists $p>1$ such that $\phi$ satisfies \ainc{p}. Similarly, \adec{} means that there exists $q< \infty$ such that $\phi$ satisfies \adec{q}.
By \cite[Lemma 2.2.6]{HarH_book} doubling $t\mapsto \phi(x,t)$ for almost every $x \in \Omega$ and \adec{} are equivalent. Doubling and increasingness also imply the following triangle inequality $\phi(x,t+s) \lesssim \phi(x,t) + \phi(x,s)$.

$\phi^{\ast}$ is the conjugate $\Phi$-function defined as
\begin{align*}
\phi^{\ast}(x,t) = \sup_{s>0} \{st - \phi(x,s)\},
\end{align*}
and it gives the \emph{Young's inequality}:
\[
st \le \phi(x,s) + \phi^{\ast}(x,t).
\]
The conjugate function enjoys the following properties:
\begin{itemize}
\item[1)] $\phi^{\ast}$ satisfies \ainc{q'} and \adec{p'} if and only if $\phi$ satisfies \ainc{p} and \adec{q}, respectively \cite[Proposition 2.4.9]{HarH_book};
\item[2)] If $\phi$ satisfies \ainc{} and \adec{}, then $\phi^{\ast}\left (x,\dfrac{\phi(x,t)}{t}\right ) \lesssim \phi(x,t)$ (see below);
\item[3)] $\phi^{\ast}$ satisfies \azero{}, \aone{} and \atwo{} if and only if $\phi$ satisfies them \cite[Proposition 2.4.5, Lemma 3.7.6, Lemma 4.1.7, Lemma 4.2.4]{HarH_book}.
\end{itemize}
For $\phi \in \Phi_c(\Omega)$ we have $\phi^{\ast}\left (x,\dfrac{\phi(x,t)}{t}\right ) \leq \phi(x,t)$ \cite[p. 35]{HarH_book}. In the case of weak $\Phi$-function we choose an equivalent convex $\Phi$-function with an equivalence constant $L$ \cite[Lemma 2.2.1]{HarH_book}. Then also $\eta^{\ast} \simeq \phi^{\ast}$ \cite[Lemma 2.4.3 (c)]{HarH_book} with the same constant $L$. Now using \adec{} of $\phi^*$ and $\phi$ we get
\begin{align*}
\phi^{\ast}\left (x,\dfrac{\phi(x,t)}{t}\right ) &\lesssim \phi^{\ast}\left (x,  \dfrac{\phi(x,t)}{L^2t}\right ) \leq \eta^{\ast}\left (x,  \dfrac{\eta(x,Lt)}{Lt}\right ) = \eta(x,Lt) \leq \phi(x,L^2t) \lesssim \phi(x,t).
\end{align*}

\subsection{Generalized Orlicz space}

Let $\phi \in \Phiw(\Omega)$. The \emph{generalized Orlicz space} is defined as the 
set 
$$L^{\phi}(\Omega) : = \bigg\{ f \in L^0(\Omega) : \lim_{\lambda\to 0^+} 
\varrho_{\phi}(\lambda f) =0 \bigg\}
$$
equipped with the (Luxemburg) (quasi)norm
$$ \| f \|_{L^{\phi}(\Omega)} := \inf \bigg\{ \lambda >0 : \varrho_{\phi} 
\Big( \frac{ f}{\lambda}\Big) \le1\bigg\},$$
where $ \varrho_{\phi}(f)$ is the modular of $f \in L^0(\Omega)$ defined by 
\[
\varrho_{\phi}(f):=\int_{\Omega} \phi(x, |f(x)|)\,dx. 
\]

A function $u \in L^{\phi}(\Omega)$ belongs to the \emph{Orlicz--Sobolev space 
$W^{1,\phi}(\Omega)$} if its weak partial derivatives $\partial_1 u, \dots 
\partial_n u$ exist and belong to $L^{\phi}(\Omega)$. The norm of this space is defined as
\begin{align*}
\|u\|_{W^{1,\phi}(\Omega)} := \|u\|_{L^{\phi}(\Omega)} + \||\nabla u|\|_{L^{\phi}(\Omega)}.
\end{align*}
With a slight abuse of notation, we abbreviate $\||\nabla u|\|_{L^{\phi}(\Omega)}$ as $\|\nabla u\|_{L^{\phi}(\Omega)}$.
We also omit the set if there is no danger of confusion, i.e. $\|u\|_{L^{\phi}(\Omega)} = \|u\|_{\phi}$.
Furthermore, 
$W^{1,\phi}_0 (\Omega)$ is the closure of $C^{\infty}_0(\Omega)$ in 
$W^{1,\phi}(\Omega)$. $C^\infty(\Omega) \cap W^{1,\phi}(\Omega)$ is dense in $W^{1,\phi}(\Omega)$ for bounded $\Omega$ if $\phi$ satisfies \azero{}, \aone{} and \adec{}, see \cite[Theorem 6.4.7]{HarH_book}.

\subsection{Auxiliary results}

\begin{defn}\label{defn:measure-density}
We say that a domain $\Omega\subset \Rn$ satisfies \emph{measure density condition} if
there exists $c, r_0>0$ such that
\begin{equation*}
|B(z,r) \cap \Omega^c| \ge c |B(z, r)|
\end{equation*}
for all $z \in \partial \Omega$ and all $0<r\le r_0$.
\end{defn}

 Since in the following lemma we require boundedness of the Hardy--Littlewood maximal function in $L^{\phi}(\Rn)$, we need to introduce the assumption \atwo{}. This assumption is inherited by our main theorem. In general, it is not needed with problems having a bounded domain $\Omega$ and even though higher integrability (Proposition \ref{pro:higher-integrability}) requires $\phi$ to be defined in $\Rn$, it is not required. 
 
\begin{lem}\label{Hardy_zbv}
Let $\phi \in \Phiw(\Rn)$ satisfy \azero{}, \aone{}, \atwo{}, \ainc{} and \adec{}.
  Let $u\in W^{1,\phi}(\Omega)$, and extend u by zero outside $\Omega$. If
  \[
  \frac{u}{\dist(\cdot,\partial\Omega)} \in L^{\phi}(\Rn),
  \]
  then  $u\in W^{1,\phi}_0(\Omega)$
\end{lem}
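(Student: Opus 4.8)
The plan is to show that $u$ can be approximated in $W^{1,\phi}(\Rn)$ by functions compactly supported in $\Omega$, using the Hardy-type hypothesis on $u/\dist(\cdot,\partial\Omega)$ to control the error near the boundary. The standard device is a cutoff: let $d(x) = \dist(x,\partial\Omega)$ and for small $\ve>0$ pick a Lipschitz function $\eta_\ve$ with $\eta_\ve = 0$ on $\{d < \ve\}$, $\eta_\ve = 1$ on $\{d > 2\ve\}$, $0\le \eta_\ve \le 1$, and $|\nabla\eta_\ve| \le C/\ve \le C/d$ on the transition annulus $A_\ve := \{\ve \le d \le 2\ve\}$. Then $\eta_\ve u$ has support compactly contained in $\Omega$, so by a standard mollification argument (using that $C^\infty(\Omega)\cap W^{1,\phi}(\Omega)$ is dense, available here since $\phi$ satisfies \azero{}, \aone{} and \adec{}, and that mollifications of a compactly supported $W^{1,\phi}$ function converge in $W^{1,\phi}$ — this is where \atwo{} and boundedness of the maximal operator enter) we get $\eta_\ve u \in W^{1,\phi}_0(\Omega)$. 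It therefore suffices to show $\eta_\ve u \to u$ in $W^{1,\phi}(\Rn)$ as $\ve \to 0$.

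For the convergence, write $u - \eta_\ve u = (1-\eta_\ve)u$ and $\nabla(u-\eta_\ve u) = (1-\eta_\ve)\nabla u - u\nabla\eta_\ve$. The first term $(1-\eta_\ve)\nabla u$ is supported in $\{d \le 2\ve\}$ and bounded by $|\nabla u| \in L^\phi$, so it tends to $0$ in $L^\phi(\Rn)$ by dominated convergence for the modular (using \adec{}, so the modular controls the norm) together with absolute continuity of the integral; similarly $(1-\eta_\ve)u \to 0$ in $L^\phi$. The delicate term is $u\nabla\eta_\ve$: on $A_\ve$ we estimate $|u\nabla\eta_\ve| \le C|u|/d = C\,|u/d|$ pointwise, and $u/d \in L^\phi(\Rn)$ by hypothesis, so $|u\nabla\eta_\ve| \le C\,|u/d|\,\chi_{A_\ve}$; since $|A_\ve| \to 0$ and $u/d \in L^\phi$, absolute continuity again gives $\|u\nabla\eta_\ve\|_{L^\phi(\Rn)} \to 0$. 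Combining, $\eta_\ve u \to u$ in $W^{1,\phi}(\Rn)$, and since each $\eta_\ve u \in W^{1,\phi}_0(\Omega)$, which is closed, $u \in W^{1,\phi}_0(\Omega)$.

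The main obstacle is the passage $\eta_\ve u \in W^{1,\phi}_0(\Omega)$, i.e. that a $W^{1,\phi}$ function with support compactly contained in $\Omega$ lies in the closure of $C^\infty_0$. One cannot directly mollify unless mollification is continuous on $L^\phi$, which is exactly why \atwo{} and the boundedness of the Hardy–Littlewood maximal operator on $L^\phi(\Rn)$ are assumed: the pointwise bound on mollifications by $Mf$ then gives, via \adec{}, the modular convergence $\rho_\phi(\lambda(f_\delta - f)) \to 0$ for every $\lambda$, hence norm convergence. A secondary technical point is ensuring the cutoff product rule holds in the weak sense and that $\eta_\ve u \in W^{1,\phi}$ at all, which follows since $\eta_\ve$ is Lipschitz with bounded gradient and $u\in W^{1,\phi}$; the triangle inequality $\phi(x,s+t)\lesssim\phi(x,s)+\phi(x,t)$ from \adec{} handles the sum estimates. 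The rest — the dominated-convergence steps — are routine once one works at the level of the modular and uses \adec{} to move between modular and norm.
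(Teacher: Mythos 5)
Your proof is correct and reconstructs exactly the argument the paper outsources: the paper simply says to adapt the proof of \cite[Theorem 3.4, p. 225]{EdmE87}, which is precisely the cutoff-and-mollify device you spell out, using the Hardy hypothesis $u/d\in L^\phi$ to kill the $u\nabla\eta_\ve$ term. The three ingredients you identify as needing Musielak--Orlicz replacements (density of smooth functions, convergence of mollifications, maximal-function boundedness) are the same three the paper cites from \cite{HarH_book,Has15,Has16}.
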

\begin{proof}
  The proof of \cite[Theorem 3.4, p.  225]{EdmE87} can be
  adapted to cover the generalized Orlicz case, since all we need to assume is that $C^{\infty}_0(\Rn)$ is dense in $W^{1,\phi}(\Rn)$ \cite[Theorem 6.4.4]{HarH_book}, convolution converges in $L^\phi(\Rn)$ \cite[Theorem 4.4.7]{HarH_book}, and that the Hardy--Littlewood maximal function is bounded from $L^{\phi}(\Rn)$ to $L^{\phi}(\Rn)$, see \cite{Has15,Has16,HarH_book}. 
\end{proof}

Next we prove Hardy's inequality. 

\begin{lem}[Hardy's inequality]\label{lem:Hardy}
Let $\Omega \subset \Rn$ be bounded
 and assume the measure density condition, Definition~\ref{defn:measure-density}, is satisfied.
Let $\phi \in \Phiw(\Omega)$ satisfy \azero{}, \aone{}, \ainc{} and \adec{}.
Then  
\begin{equation*}
    \Bigg\| \frac{u}{\dist(\cdot,\partial
      \Omega)}\Bigg\|_{L^{\phi}(\Omega)} 
    \leq C \|\nabla u\|_{L^{\phi}(\Omega)}
  \end{equation*}
  holds for all functions $u\in W_0^{1, \phi}(\Omega)$. 
\end{lem}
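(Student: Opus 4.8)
The plan is to reduce the generalized Orlicz statement to a pointwise (or modular) form of Hardy's inequality that can be handled by the standard covering/chaining argument, combined with the boundedness of the maximal operator. Concretely, I would first recall that since $\phi$ satisfies \azero{}, \aone{}, \ainc{} and \adec{}, the norm $\|\cdot\|_{L^\phi(\Omega)}$ behaves well under the basic operations we need (unit ball property, norm--modular comparison from \adec{}/\ainc{}), and that it suffices to prove the inequality for $u \in C_0^\infty(\Omega)$, the general case following by density in $W_0^{1,\phi}(\Omega)$ together with the fact that convergence in $W_0^{1,\phi}$ forces convergence of the difference quotients $u/\dist(\cdot,\partial\Omega)$ in $L^\phi$ (here one uses that for smooth compactly supported functions the left side is already controlled, so the estimate passes to the limit). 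Actually, to avoid circularity with Lemma~\ref{Hardy_zbv}, I would argue directly on $C_0^\infty(\Omega)$ and then take closures.

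Next, the core step: for $u \in C_0^\infty(\Omega)$ one has the classical pointwise bound
\[
\frac{|u(x)|}{\dist(x,\partial\Omega)} \lesssim M(|\nabla u|)(x) \qquad \text{for a.e. } x \in \Omega,
\]
where $M$ is the Hardy--Littlewood maximal operator (extending $u$ and $\nabla u$ by zero to $\Rn$; the measure density condition guarantees that, for $x \in \Omega$ with $d := \dist(x,\partial\Omega) \le r_0$, a definite fraction of the ball $B(x, 2d)$ lies outside $\Omega$ where $u$ vanishes, so a Poincaré-type estimate on $B(x,2d)$ gives the average of $|u|$ near $x$ controlled by $d\cdot$(average of $|\nabla u|$), whence the maximal function bound; for $x$ far from the boundary the bound is trivial since $\dist(x,\partial\Omega)$ is bounded below and $u$ is bounded, or one chains to the boundary). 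Taking $L^\phi$-norms and invoking boundedness of $M$ on $L^\phi(\Rn)$ — which holds under \azero{}, \aone{}, \atwo{}, \ainc{}, \adec{} — would finish it. The subtlety is that Lemma~\ref{lem:Hardy} as stated does \emph{not} assume \atwo{}, so I would instead use the \emph{local} nature of the estimate: extend $\phi$ from $\Omega$ to $\Rn$ preserving \azero{}, \aone{}, \ainc{}, \adec{} (via \cite[Section 7.2]{HarH_book} or by a standard McShane-type extension of the defining data), then enlarge $\Omega$ slightly to a bounded $\Omega' \supset\supset \Omega$ and replace $\phi$ outside $\Omega'$ by a fixed polynomial $\Phi$-function so that the global function satisfies \atwo{} as well; since $u$ and $\nabla u$ are supported in $\Omega$, the maximal function $M(|\nabla u|)$ is only evaluated where it matters and the $L^\phi$-norms over $\Omega$ are unchanged. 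This makes $M$ bounded on the ambient space and the estimate goes through with a constant depending only on $n$, the measure density constants $c, r_0$, and the $\Phi$-function data.

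The main obstacle I expect is establishing the pointwise/chaining estimate $|u(x)|/\dist(x,\partial\Omega) \lesssim M(|\nabla u|)(x)$ \emph{uniformly} up to the boundary using only the measure density condition — in particular handling points $x$ whose distance to $\partial\Omega$ is comparable to $\diam\Omega$, where a single Poincaré ball does not see the boundary. The standard fix is a Boman-type chain of balls from $x$ toward a nearby boundary point, at the end of which the measure density condition supplies a ball on which $u$ has small average (because it partly lies in $\Omega^c$), so telescoping the Poincaré estimates along the chain yields the bound with the maximal function of $|\nabla u|$ on a fixed enlargement of $B(x,\dist(x,\partial\Omega))$; once this geometric lemma is in hand, the functional-analytic part (take $\|\cdot\|_{L^\phi}$, apply boundedness of $M$) is routine. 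A secondary but minor point is checking that the extension of $\phi$ to $\Rn$ can be arranged to keep all four conditions with comparable constants, which is where I would lean on the cited book results.
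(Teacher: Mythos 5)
Your skeleton matches the paper's exactly: reduce to $u \in C_0^\infty(\Omega)$ by density, establish a pointwise bound $|u(x)| \lesssim \dist(x,\partial\Omega)\, M(|\nabla u|)(x)$, and conclude from boundedness of the Hardy--Littlewood maximal operator on $L^\phi$. Both of the detours you build in, however, are unnecessary, and the paper's proof avoids them. For the pointwise step you do not need a Boman chain or the measure density condition at all: the paper invokes Haj{\l}asz's Proposition~1 \cite{Haj99}, which gives $|u(x)| \le C_n \dist(x,\partial\Omega)\, M(|\nabla u|)(x)$ for every $u \in C_0^\infty(\Omega)$ and \emph{every} open $\Omega$, with no geometric hypothesis whatsoever; compact support of $u$ is the only input, so the chaining difficulty you flag for $x$ far from $\partial\Omega$ never arises. (Indeed, the measure density condition in the lemma's hypotheses is not used in this particular proof; it is carried for uniformity with the surrounding results.) For the maximal-operator step, your proposed workaround of extending $\phi$ to $\Rn$ in order to recover \atwo{} is also not needed: the paper applies $M \colon L^\phi(\Omega) \to L^\phi(\Omega)$ directly on the bounded domain $\Omega$ via \cite[Theorem 4.3.4]{HarH_book}, where \azero{}, \aone{}, \ainc{} suffice, and --- as the paper notes in the remark preceding Lemma~\ref{Hardy_zbv} --- \atwo{} is only relevant for the maximal operator on all of $\Rn$. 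Dropping these two detours collapses your plan to the paper's three-line argument, so the underlying idea is right but considerably overbuilt; your version also hinges on a nontrivial extension of $\phi$ to $\Rn$ preserving \azero{}, \aone{}, \ainc{}, \adec{}, which you would need to justify separately and which the paper's formulation sidesteps entirely.
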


\begin{proof}
 Since $C_0^\infty(\Omega)$ functions are dense in $W^{1, \phi}_0(\Omega)$, it suffices to prove the claim for them.
 By \cite[Proposition 1]{Haj99} we have
 \[
 |u(x)| \le C \dist(x,\partial \Omega) M|\nabla u| (x)
 \]
for all $u \in C_0^\infty(\Omega)$.
 This yields the claim since $M: L^\phi(\Omega) \to L^\phi(\Omega)$ is bounded \cite[Theorem 4.3.4]{HarH_book}.
\end{proof}

The next lemma has been proved under capacity fatness in \cite[Lemma 3.25]{LiM98} to a classical case and in
\cite[Lemma 4.7]{EleHL11} to the variable exponent case.

\begin{lem}\label{lemma:LM} 
Let $\Omega \subset \Rn$ be bounded, and assume that it satisfies the measure density condition, Definition~\ref{defn:measure-density}.
Let  $\phi \in \Phiw(\Rn)$ and
let $\phi_i \in \Phiw(\Rn)$ be a sequence of  $\Phi$-functions such that for every sequence $(t_i)$ from  
$[0, \infty]$ converging to $t$  and for almost every $x \in \Omega$,  we have $\phi_i(x,t_i) \to \phi(x, t)$.
  Assume that  $\phi$ and each $\phi_i$ satisfies \azero{}, \aone{}, \atwo{}, \ainc{} and \adec{} with constants independent of $i$.
  
 Let $u\in W^{1,\phi}(\Omega)$ and $u_i\in
  W^{1,\phi_i}(\Omega)$ for every $i$ so that $u_i \to u$ almost
  everywhere in $\Omega$. If $f \in W^{1,\phi}(\Omega) \cap
  \bigcap_i W^{1,\phi_i}(\Omega)$ and $u_i-f \in
  W^{1,\phi_i}_0(\Omega)$ for every $i$ with
  \begin{equation*}
    \int_\Omega \phi_i(x, |\nabla(u_i-f)|) \,dx \leq M,
  \end{equation*}
  where $M$ is finite and independent of $i$, then $u-f \in
  W^{1,\phi}_0(\Omega)$.
\end{lem}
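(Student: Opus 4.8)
The plan is to use Hardy's inequality (Lemma~\ref{lem:Hardy}) together with the characterization of $W^{1,\phi}_0(\Omega)$ given in Lemma~\ref{Hardy_zbv}. By assumption $u_i - f \in W^{1,\phi_i}_0(\Omega)$, so Lemma~\ref{lem:Hardy} applied with $\phi_i$ yields a uniform bound
\[
\int_\Omega \phi_i\Big(x, \frac{|u_i - f|}{\dist(x,\partial\Omega)}\Big)\,dx \lesssim \int_\Omega \phi_i(x, |\nabla(u_i-f)|)\,dx \le M,
\]
where the implicit constant can be taken independent of $i$ because the structural constants of the $\phi_i$ in \azero{}, \aone{}, \ainc{}, \adec{} are uniform (Hardy's inequality goes through the boundedness of $M$ on $L^{\phi_i}$, whose norm depends only on those constants). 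This is the quantitative input that survives the passage to the limit.

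Next I would pass to the limit in the integral using the almost everywhere convergence. Since $u_i \to u$ a.e.\ and $\dist(\cdot,\partial\Omega)$ is a fixed function, we have $t_i := |u_i(x)-f(x)|/\dist(x,\partial\Omega) \to t := |u(x)-f(x)|/\dist(x,\partial\Omega)$ for a.e.\ $x$, hence $\phi_i(x,t_i) \to \phi(x,t)$ for a.e.\ $x$ by the hypothesis on convergence of the $\phi_i$. Applying Fatou's lemma,
\[
\int_\Omega \phi\Big(x, \frac{|u-f|}{\dist(x,\partial\Omega)}\Big)\,dx \le \liminf_{i\to\infty} \int_\Omega \phi_i\Big(x, \frac{|u_i-f|}{\dist(x,\partial\Omega)}\Big)\,dx \le cM < \infty.
\]
In particular $\frac{u-f}{\dist(\cdot,\partial\Omega)} \in L^\phi(\Omega)$; extending $u-f$ by zero outside $\Omega$ (which is legitimate since the quotient is in $L^\phi$ and $u, f \in W^{1,\phi}(\Omega)$), we get $\frac{u-f}{\dist(\cdot,\partial\Omega)} \in L^\phi(\Rn)$. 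Then Lemma~\ref{Hardy_zbv} applied to $u - f$ gives $u - f \in W^{1,\phi}_0(\Omega)$, which is the claim. Note that $u - f \in W^{1,\phi}(\Omega)$ as required by that lemma, since both $u$ and $f$ lie in $W^{1,\phi}(\Omega)$.

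The main obstacle, and the point that needs care, is the \emph{uniformity of the constant in Hardy's inequality} across the sequence $\phi_i$: one must check that Lemma~\ref{lem:Hardy} — really the boundedness of the maximal operator via \cite[Proposition 1]{Haj99} and \cite[Theorem 4.3.4]{HarH_book} — produces a constant depending only on the common structural constants of the $\phi_i$ and on $\Omega$, not on $i$ individually. A secondary subtlety is justifying that finiteness of $\varrho_\phi\big(\lambda \tfrac{u-f}{\dist(\cdot,\partial\Omega)}\big)$ for the relevant $\lambda$ — which follows from \adec{} (doubling) of $\phi$ applied to the bound above — actually places the function in $L^\phi(\Omega)$ as defined (i.e.\ that the modular tends to $0$ as $\lambda\to 0^+$), but this is immediate from \adec{} and dominated convergence. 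Everything else is a routine combination of Fatou's lemma with the two cited lemmas.
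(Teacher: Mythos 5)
Your proposal is correct and follows essentially the same route as the paper: apply Hardy's inequality (with constant uniform in $i$ thanks to the uniform structural constants), convert between norm and modular via \adec{}, invoke the hypothesis $\phi_i(x,t_i)\to\phi(x,t)$ together with Fatou's lemma, and conclude via Lemma~\ref{Hardy_zbv}. The only cosmetic caveat is that your displayed modular inequality is written as if the Hardy constant acted linearly on the modular, whereas \adec{} only gives ``modular bounded by $M$ implies quotient's modular bounded by $c(M)$''; since you only use boundedness, this does not affect the argument.
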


\begin{proof}
   Applying Hardy's inequality (Lemma~\ref{lem:Hardy}) we obtain
  \begin{equation}\label{Hardyey2}
    \Bigg\| \frac{|u_i -f|}{\dist(\cdot,\partial
      \Omega)}\Bigg\|_{L^{\phi_i}(\Omega)} 
    \leq C \|\nabla (u_i -f)\|_{L^{\phi_i}(\Omega)}.
  \end{equation}
Since $\phi_i$ satisfies \adec{}, the norm is finite if and only if the modular is finite. 
Thus by the assumption we have
\[
\int_\Omega \phi_i\Bigg(x, \frac{|u_i-f|}{\dist(x,\partial \Omega)} \Bigg) \,dx \leq
  c(M).
\]   
By assumptions   
$\phi_i\big(x, \frac{|u_i-f|}{\dist(x,\partial \Omega)} \big) \to \phi\big(x, \frac{|u - f|}{\dist(x,\partial \Omega)}\big)$ 
and hence Fatou's lemma gives
  \[
  \int_\Omega \phi\Bigg(x, \frac{|u - f|}{\dist(x,\partial \Omega)}
  \Bigg) \, dx \leq \liminf_{i\to \infty}\int_\Omega \phi_i\Bigg(x, \frac{|u_i
    -f|}{\dist(x,\partial \Omega)} \Bigg) \,dx \leq
  c(M).
  \]
  Since $u-f \in W^{1,\phi}(\Omega)$, the claim follows by
  Lemma~\ref{Hardy_zbv}.
\end{proof}

\section{Properties of solutions}

We start by laying out our assumptions required to prove stability. We show below that these assumptions are essentially equivalent to the assumptions given in the variable exponent situation \cite{EleHL13}.

\begin{assumptions}\label{ass:main}
For $i=1,2\dots$, let $\phi, \phi_i \in \Phic(\Rn)$  satisfy \azero{}, \aone{}, \atwo{}, \ainc{} and \adec{q} with $q<n$ and 
constants independent of $i$.
 
Suppose that $\mathcal{A},\mathcal{A}_i: \Omega \times \Rn \to \Rn$ are Carath\'eodory functions, i.e. $x\mapsto \mathcal{A}(x,\xi)$ is measurable for every $\xi$ and $\xi \mapsto \mathcal{A}(x,\xi)$ is continuous for almost every $\xi \in \Rn$. Suppose additionally
\begin{itemize}
\item[1)] There exists a constant $c_1>0$  independent of $i$ such that 
\begin{align*}
c_1 \phi_i(x, |\xi|) \leq \mathcal{A}_i(x,\xi)\cdot \xi ;
\end{align*}
\item[2)]There exists a constant $c_2>0$  independent of $i$ such that 
\begin{align*}
|\mathcal{A}_i(x,\xi)| \leq c_2 \dfrac{\phi_i(x,|\xi|)}{|\xi|};
\end{align*}
\item[3)]For all $x \in \Omega$ and $\xi \neq \eta$ we have
\begin{align*}
(\mathcal{A}_i(x,\xi) - \mathcal{A}_i(x,\eta))\cdot(\xi -\eta) >0;
\end{align*}
\end{itemize}

Let $\gamma >0$ be from the higher integrability of solutions, see Proposition~\ref{pro:higher-integrability}. We make three assumptions 
about convergence of $(\mathcal{A}_i)$ and $(\phi_i)$:
\begin{itemize}
\item[4)] $\mathcal{A}_i(x, \xi ) \to \mathcal{A}(x, \xi)$ locally uniformly i.e. the convergence is uniform for every $D \times (0, t)$, where $D \Subset \Omega$ and $t\in (0, \infty)$;
\item[5)] $\phi_i(x, t) \to \phi(x, t)$ for all $x \in \Omega$, and for all $t>0$  and $(\phi_i)$ is locally uniformly equicontinous with respect to the $t$-variable;
\item [6)] there exist $L>0$ and $t_0 >0$ such that for every  $\theta \in (0, \gamma)$ there exists $i_\theta \in \N$ such that
\[
\frac{\phi_i(x, t)}{\phi(x, t)^{1+ \theta}}  \le L \text{ for all $i\ge i_\theta$, for all $x \in \Omega$ and for all $t \ge t_0$},
\]
and 
\[
\frac{\phi(x, t)}{\phi_i(x, t)^{1+ \theta}}  \le L \text{ for all $i\ge i_\theta$, for all $x \in \Omega$ and for all $t \ge t_0$}.
\]
\end{itemize}
\end{assumptions}

We notice that structural conditions 1-3) for $\mathcal{A}_i$ and the locally uniform convergence imply similar structural conditions for $\mathcal{A}$ also: For example, since
\begin{align*}
\mathcal{A}(x,\xi)\cdot \xi = (\mathcal{A}(x,\xi) - \mathcal{A}_i(x,\xi))\cdot \xi + \mathcal{A}_i(x,\xi)\cdot \xi,
\end{align*}
where the first term on the right hand side converges to 0 due to pointwise convergence, we obtain
\begin{align*}
\mathcal{A}_i(x,\xi)\cdot \xi \geq c_1 \phi_i(x,|\xi|) \to c_1 \phi(x,|\xi|).
\end{align*}
Other conditions are proven similarly.

Condition 6) is inspired by the concept of dominations in Orlicz spaces used for instance in \cite{Cia04}. It is said that an Orlicz function $B$ dominates $A$ if there exists a constant $c>0$ such that $A(t) \leq B(ct)$. Since we are dealing with doubling functions, this condition can be rewritten as $\frac{A(t)}{B(t)} \leq \tilde c$. Now 6) just states that increasing the exponent by $1+\theta$ yields domination in both direction at infinity (i.e. for large $t$). 

\begin{eg}
Let $\phi_i(x, t) := t^{p_i(x)}$.
In the variable exponent case stability has been studied in \cite{EleHL13}, see also \cite{EleHL11},  under assumptions that  $\sup_i \sup_{x \in \Omega }p_i(x) <n$,  $(p_i)$ converges 
pointwise to $p$ and 
each $p_i$ is $\log$-Hölder continuous with the same constant. 
 We show that our  assumptions are essentially equivalent in the variable exponent case.  First of all, assumptions 1-4) are identical in both cases.

 Assumptions \ref{ass:main} imply ones in \cite{EleHL13}: By \cite[Lemma 7.1.2]{HarH_book} $\phi_i$ satisfies \aone{} if and only if $\frac1{p_i}$ is locally $\log$-Hölder continuous. 
Since each $p_i$ is uniformly bounded this yields that 
$\phi_i$ satisfies \aone{} if and only if $p_i$ is locally $\log$-Hölder continuous. 

Assumptions in \cite{EleHL13} imply Assumptions \ref{ass:main}: Since $p_i \to p$ and $(p_i)$ is uniformly bounded by $n$, condition 5) follows. We have to show that 6) holds. Since $p_i \to p$ pointwise and $(p_i)$ is equicontinuous, we obtain by 
Arzel\`{a}--Ascoli theorem that $p_i \to p$ uniformly. Thus for every $\theta>0$ there exists $i_\theta$ such that
$|p(x) - p_i(x)|<\theta$ for all $x \in \Omega$ and all $i>i_\theta$. Thus for $t \ge 1$ we have
$t^{p_i(x)} \le t^{p(x) + \theta} \le t^{(1+\theta)p(x)} $ and $t^{p(x)} \le t^{p_i(x) + \theta}\le t^{(1+\theta)p_i(x)}$, and hence 6) holds. 
Note that in variable exponent spaces, extension of $p$ from $\Omega$ to $\Rn$ can always be done unlike in generalized Orlicz spaces. The extended exponent satisfies the decay $\log$-Hölder condition, that is slightly stronger than \atwo{}, see  \cite[Section 7.1]{HarH_book}.
\end{eg}

Next we define solutions to an obstacle problem. 

\begin{defn}
Let $f\in W^{1,\phi}(\Omega)$ be a boundary value function and $\psi \in W^{1,\phi}(\Omega)$ be an obstacle. Denote 
\begin{align*}
\mathcal{K}_{\psi}^{f,\phi}(\Omega) := \{u \in W^{1,\phi}(\Omega) : u-f \in W^{1,\phi}_0(\Omega), u \geq \psi \text{ a.e. in } \Omega\}.
\end{align*}
We say that  $u$ is a solution to an $(\mathcal{A}, \mathcal{K}_\psi^{f,\phi}(\Omega))$-obstacle problem  if
\begin{align}\label{eq:equation}
\int_{\Omega} \mathcal{A}(x, \nabla u)\cdot \nabla (w-u) \, dx \geq 0
\end{align}
for all $w \in \mathcal{K}_\psi^{f,\phi}(\Omega)$.
\end{defn}
We immediately see that $u$ is also a solution in any open subset of $\Omega$.

We say a function $u\in W^{1,\phi}_\loc(\Omega)$ is an $\mathcal{A}$-supersolution if
\begin{align*}
\int_{\Omega} \mathcal{A}(x,\nabla u) \cdot \nabla w \, dx \geq 0
\end{align*}
for all non-negative $w \in W^{1,\phi}_0(\Omega)$.
Since for an solution $u$ to an $(\mathcal{A}, \mathcal{K}_\psi^{f,\phi}(\Omega))$-obstacle problem, we have $u+w \in \mathcal{K}_{\psi}^{f,\phi}(\Omega)$ when $w \in W^{1,\phi}_0(\Omega)$ is non-negative, $u$ is also a supersolution.
Next we give a proof for global higher integrability of a solutions to obstacle problems. 

\begin{lem}[Interior Caccioppoli inequality]
Let $\phi \in \Phiw(\Omega)$ satisfy \adec{q} and let $u$ be a solution to an $(\mathcal{A}, \mathcal{K}_\psi^{f,\phi}(\Omega))$-obstacle problem, where $\mathcal{A}$ satisfies 1-3) in Assumptions \ref{ass:main}. Then
\begin{align*}
\fint_{B} \phi(x,|\nabla u|) \, dx &\lesssim \fint_{B}\phi\left (x, \dfrac{|u-u_{2B}|}{\diam(2B)}\right ) \, dx + \fint_{B}\phi\left (x, \dfrac{|\psi-\psi_{2B}|}{\diam(2B)}\right ) \, dx  \\
&\quad +\fint_{B}\phi\left (x, |\nabla \psi| \right ) \, dx
\end{align*}
for any ball $B$ with $2B \subset \Omega$. The implicit constant depends only on $n,c_1,c_2,q$ and $L_q$.
\end{lem}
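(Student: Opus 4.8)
The plan is to test the variational inequality \eqref{eq:equation} against a standard Caccioppoli competitor and exploit the structure conditions 1)--3). Fix a ball $B$ with $2B\subset\Omega$, take a cutoff $\eta\in C_0^\infty(2B)$ with $0\le\eta\le1$, $\eta\equiv1$ on $B$ and $|\nabla\eta|\lesssim 1/\diam(2B)$, and set
\[
w := u - \eta^{q}\,(u - \max\{\psi, u_{2B}\})\qquad\text{(or a similar truncation ensuring }w\ge\psi\text{).}
\]
One must first check $w\in\mathcal{K}_\psi^{f,\phi}(\Omega)$: the term subtracted is supported in $2B$, lies in $W_0^{1,\phi}$ there, and the factor $\eta^{q}$ together with the truncation at the level $u_{2B}$ (the mean of $u$ over $2B$) keeps $w\ge\psi$ a.e.\ while not disturbing the boundary datum $f$. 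Plugging this $w$ into \eqref{eq:equation} gives $\int_{2B}\mathcal{A}(x,\nabla u)\cdot\nabla\big(\eta^q(u-\max\{\psi,u_{2B}\})\big)\,dx\ge 0$, i.e.\ after expanding the product rule,
\[
\int_{2B}\eta^q\,\mathcal{A}(x,\nabla u)\cdot\nabla u\,dx
\le \int_{2B}\eta^q\,\mathcal{A}(x,\nabla u)\cdot\nabla\max\{\psi,u_{2B}\}\,dx
+ q\int_{2B}\eta^{q-1}\,\mathcal{A}(x,\nabla u)\cdot\nabla\eta\,(u-\max\{\psi,u_{2B}\})\,dx,
\]
at least on the set where $u>\max\{\psi,u_{2B}\}$ (elsewhere the relevant gradients vanish or help us).

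Next I would estimate the right-hand side. Using coercivity 1), $\eta^q\mathcal A(x,\nabla u)\cdot\nabla u\ge c_1\eta^q\phi(x,|\nabla u|)$, and using the growth bound 2), $|\mathcal A(x,\nabla u)|\le c_2\phi(x,|\nabla u|)/|\nabla u|$, which is exactly the quantity that pairs well with $\phi^\ast$ via property 2) of the conjugate, namely $\phi^\ast(x,\phi(x,t)/t)\lesssim\phi(x,t)$. So for each product term I would apply Young's inequality $ab\le\varepsilon\phi(x,|\nabla u|)+C_\varepsilon\phi^\ast(x,\cdot)$: the first term yields $\varepsilon\eta^q\phi(x,|\nabla u|)+C_\varepsilon\phi(x,|\nabla\psi|)$ (noting $\nabla\max\{\psi,u_{2B}\}$ equals $\nabla\psi$ or $0$), and the second term yields $\varepsilon\eta^q\phi(x,|\nabla u|)+C_\varepsilon\,\eta^{(q-1)p'}\phi\big(x,|u-\max\{\psi,u_{2B}\}|/\diam(2B)\big)$ after using \adec{q} to absorb the powers of $\eta$ and the Hölder conjugate of $q$. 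Here the $\eta^{q-1}$ is chosen precisely so that $(q-1)p'=(q-1)\frac{q}{q-1}=q$ balances the $\eta^q$ needed for absorption — this is the reason for the exponent $q$ in the cutoff. Choosing $\varepsilon$ small absorbs the $\varepsilon\eta^q\phi(x,|\nabla u|)$ terms into the left-hand side.

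It remains to replace $|u-\max\{\psi,u_{2B}\}|$ by the two terms $|u-u_{2B}|$ and $|\psi-\psi_{2B}|$ appearing in the statement: since $|u-\max\{\psi,u_{2B}\}|\le|u-u_{2B}|+|\psi-u_{2B}|$ and on the relevant set $u>\psi$ forces $|\psi-u_{2B}|\le|u-u_{2B}|$, or more cleanly $|\psi - u_{2B}| \le |\psi - \psi_{2B}| + |\psi_{2B} - u_{2B}|$ with the last constant controlled by averaging, one gets the stated right-hand side after using the triangle inequality $\phi(x,s+t)\lesssim\phi(x,s)+\phi(x,t)$ (valid by \adec{} and monotonicity, as noted in the excerpt) and dividing by $|B|\approx|2B|$. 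I expect the main obstacle to be the bookkeeping around the truncation level $\max\{\psi,u_{2B}\}$: one has to verify carefully that $w$ is an admissible competitor (in particular $w\ge\psi$) and that the gradient terms are correctly handled on the three regions $\{u>u_{2B}\ge\psi\}$, $\{u>\psi>u_{2B}\}$, $\{u\le\max\{\psi,u_{2B}\}\}$, so that no boundary terms are lost and the constant ends up depending only on $n,c_1,c_2,q,L_q$. The estimates involving $\phi$ and $\phi^\ast$ are routine given the listed properties, so the delicate point is purely the choice and admissibility of the test function.
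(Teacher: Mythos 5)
Your strategy is correct and your competitor is admissible, but you take a slightly different route than the paper. Both proofs use a test function of the form $w=(1-\eta^q)u+\eta^q g$ with $g\ge\psi$, followed by the same Young's inequality/absorption steps with the exponent $q$ on the cutoff chosen precisely so that $(q-1)q'=q$. Where you differ is the choice of $g$: you take $g=\max\{\psi,u_{2B}\}$, while the paper first observes that $u-u_{2B}$ solves the $(\mathcal{A},\mathcal{K}_{\psi-u_{2B}}^{f-u_{2B},\phi})$-obstacle problem and tests that shifted problem with $w:=u-u_{2B}-(u-u_{2B}-(\psi-\psi_{2B}))\eta^q$, which amounts in the original variables to $g=\psi+(u_{2B}-\psi_{2B})$. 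The paper's choice is a shift by a constant of $\psi$, so $\nabla g=\nabla\psi$ and $u-g=(u-u_{2B})-(\psi-\psi_{2B})$ splits immediately into the two desired oscillation terms with no case analysis; admissibility $g\ge\psi$ is the one-line observation $u_{2B}\ge\psi_{2B}$. Your $g$ also satisfies $|\nabla g|\le|\nabla\psi|$ and $g\ge\psi$, and $|u-g|\le|u-u_{2B}|+|\psi-\psi_{2B}|$ does hold (on $\{\psi>u_{2B}\}$ one has $\psi-u_{2B}\le\psi-\psi_{2B}$ precisely because $u_{2B}\ge\psi_{2B}$), so your route is correct. One caveat: your "more cleanly" alternative $|\psi-u_{2B}|\le|\psi-\psi_{2B}|+|\psi_{2B}-u_{2B}|$ is not actually cleaner, because the constant $|\psi_{2B}-u_{2B}|$ is not controlled ``by averaging'' by the stated right-hand side (it is $\fint_{2B}(u-\psi)$, which does not reduce to oscillations of $u$ or $\psi$ by Jensen alone); you should drop that aside and use the direct bound $|\psi-u_{2B}|\le|\psi-\psi_{2B}|$ on $\{\psi>u_{2B}\}$, which follows exactly from $u_{2B}\ge\psi_{2B}$ as above. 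In sum, your truncation competitor works and is perfectly serviceable, but the paper's affine-shift competitor avoids the max and the set decomposition and is a bit tidier.
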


\begin{proof}
Let $\eta\in C^{\infty}_0(2B)$ be a cutoff function such that, $0\le \eta\le 1$, $\eta=1$ in $B$ and $|\nabla \eta| \lesssim \frac{1}{r}$, 
where $r$ is the radius of $B$. We choose $w := u - u_{2B} -(u - u_{2B} -(\psi - \psi_{2B}))\eta^q$ as our test function.
 Note that $w \in \mathcal{K}_{\psi-u_{2B}}^{f-u_{2B},\phi}(\Omega)$ since $w-(f-u_{2B})\in W^{1,\phi}_0(\Omega)$ and $w \geq \psi-u_{2B}$ 
 due to $u \geq \psi$ and $u_{2B} \geq \psi_{2B}$. 
 Since $u- u_{2B}$ is a solution to the $(\mathcal{A}, \mathcal{K}_{\psi-u_{2B}}^{f-u_{2B},\phi}(\Omega))$-obstacle problem, we obtain 
 by \eqref{eq:equation} that
\begin{align*}
\int_{2B} \mathcal{A}(x,\nabla u) \cdot \nabla u \, dx &\leq \int_{2B} \mathcal{A}(x,\nabla u) \cdot \nabla w \, dx \leq \int_{2B} (1-\eta^q)\mathcal{A}(x,\nabla u) \cdot \nabla u \, dx \\
&\quad + \int_{2B}\eta^q \mathcal{A}(x,\nabla u) \cdot \nabla \psi \,dx \\
&\quad + q\int_{2B} (\eta^{q-1} (|\psi-\psi_{2B}| + |u - u_{2B}|))\mathcal{A}(x,\nabla u) \cdot \nabla \eta \, dx.
\end{align*}
Using the structural conditions and properties of $\eta$ we arrive at
\begin{align*}
c_1\int_{2B}\phi(x,|\nabla u|) \eta^q\, dx &\leq c_2\int_{2B}\dfrac{\phi(x,|\nabla u|)}{|\nabla u|} |\nabla \psi| \eta^q \, dx \\
&\quad+c_2q\int_{2B}\eta^{q-1}\dfrac{\phi(x,|\nabla u|)}{|\nabla u|}  \left (\frac{|\psi-\psi_{2B}|}{r} + \frac{|u - u_{2B}|}{r}\right) \, dx .
\end{align*}
Let $c$ be a constant from $\phi^*(x, \phi(x, t)/t) \le c \phi(x, t)$.
We use Young's inequality for the both terms on the right hand side, and then  \ainc{q'} of $\phi^{\ast}$ and \adec{} of $\phi$ to get 
\begin{align*}
&c_1 \int_{2B}\phi(x,|\nabla u|) \eta^q \, dx \\
& \quad \le c_2\int_{2B} \bigg(\phi^*\Big(x,  \Big(\frac{c_1}{4 L_q c c_2}\Big)^{\frac{q-1}{q}} \dfrac{\phi(x,|\nabla u|)}{|\nabla u|} \Big) +\phi\Big (x, \Big(\frac{4 L_q c c_2}{c_1}\Big)^{\frac{q-1}{q}} |\nabla \psi| \Big) \bigg) \eta^q \, dx \\
&\qquad+c_2q\int_{2B}\phi^{\ast}\left (\Big(\frac{c_1}{4 q L_q c c_2}\Big)^{\frac{q-1}{q}} \dfrac{\phi(x,|\nabla u|)\eta^{q-1}}{|\nabla u|}\right )\\
&\qquad + \phi\left (x, \Big(\frac{4 qL_q c c_2}{c_1}\Big)^{\frac{q-1}{q}} \bigg(\frac{|\psi-\psi_{2B}|}{r} + \frac{|u - u_{2B}|}{r}\bigg)\right) \, dx \\
&\leq \int_{2B} \dfrac{c_1}{2} \phi(x,|\nabla u|) \eta^q\, dx +  C\int_{2B}\phi(x,|\nabla \psi|) \, dx\\
&\quad + C \int_{2B} \phi\left (x,  \dfrac{|u-u_{2B}|}{r}\right )+\phi\left (x, \dfrac{|\psi-\psi_{2B}|}{r}\right ) \, dx.
\end{align*} 
Note that in the previous estimate we exclude $\eta^q$ from Young's inequality in the first term on the righ hand side, while in the second term we include $\eta^{q-1}$ to Young's inequality to correct its exponent.  
Subtracting the first term on the right hand side, using \adec{} to change from radius to diameter, and using $\eta = 1$ in $B$  we come to \begin{equation*}
\fint_{B}\phi(x,|\nabla u|) \, dx \lesssim \fint_{2B}\phi\left (x, \dfrac{|u-u_{2B}|}{\diam(2B)}\right )+\phi\left (x, \dfrac{|\psi-\psi_{2B}|}{\diam(2B)}\right )+ \phi(x,|\nabla \psi|)  \, dx. \qedhere
\end{equation*}
\end{proof}

Next we prove a similar result for balls which are sufficiently near the boundary of $\Omega$. Note that in the next result, assumptions \azero{} and \aone{} are only needed for the case where $f \geq \psi$ almost everywhere does not hold a priori.

\begin{lem}[Caccioppoli inequality near the boundary] Let $\phi \in \Phiw(\Rn)$ satisfy \azero{}, \aone{} and \adec{q}, $u$ be a solution to an
$(\mathcal{A}, \mathcal{K}_\psi^{f,\phi}(\Omega))$-obstacle problem \eqref{eq:equation},  where $\mathcal{A}$ satisfies 1-3) in Assumptions \ref{ass:main}, and $f \in W^{1,\phi}(\Omega)$ be such that $u-f \in W^{1,\phi}_0(\Omega)$. Then
\begin{align*}
\dfrac{1}{|B|}\int_{B \cap \Omega} \phi(x,|\nabla u|) \, dx \lesssim  \dfrac{1}{|2B|} \int_{2B \cap \Omega} \phi(x, |\nabla f|) \, dx 
+\dfrac{1}{|2B|} \int_{2B \cap \Omega} \phi\left (x, \dfrac{|u-f|}{\diam(2B)}\right ) \, dx, 
\end{align*}
where $B$ is a ball with center in $\Omega$ but $2B \cap \Omega^{c} \not= \emptyset$. The implicit constant depends only on $n,c_1,c_2,q$ and $L_q$.
\end{lem}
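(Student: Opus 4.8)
The plan is to mimic the interior Caccioppoli argument, but using the boundary value function $f$ in place of the averages $u_{2B}$, and to exploit the fact that $u-f$ vanishes on the boundary (in the $W^{1,\phi}_0$ sense) so that a test function can be built without a cutoff near $\partial\Omega$. Concretely, let $\eta\in C^\infty_0(2B)$ with $0\le\eta\le1$, $\eta=1$ on $B$ and $|\nabla\eta|\lesssim 1/r$, and set $w:=u-(u-\max\{f,\psi\})\eta^q$, or more simply $w := u - (u-f)\eta^q$ once we have reduced to the case $f\ge\psi$ a.e. The first step is that reduction: replace $f$ by $\tilde f:=\max\{f,\psi\}$, which still lies in $W^{1,\phi}(\Omega)$, still satisfies $u-\tilde f\in W^{1,\phi}_0(\Omega)$ (here one needs that $u\ge\psi$ so $u-\tilde f = \min\{u-f,\,u-\psi\}\ge 0$ has the right boundary trace — this is where \azero{} and \aone{} enter, via the characterization of $W^{1,\phi}_0$ used implicitly through Lemma~\ref{Hardy_zbv}/\ref{lem:Hardy}), and satisfies $\tilde f\ge\psi$, so $w=u-(u-\tilde f)\eta^q\ge\psi$ whenever it should and $w-f\in W^{1,\phi}_0$; moreover $\phi(x,|\nabla\tilde f|)\le\phi(x,|\nabla f|)+\phi(x,|\nabla\psi|)$ a.e., and one checks the $\psi$-gradient term is absorbed into the $f$-term by the hypotheses relating $f$ and $\psi$ — actually since only $f,\psi\in W^{1,\phi^{1+s}}$ globally is assumed in the main theorem and here we only need the local estimate, I would simply keep $\tilde f$ and note $\phi(x,|\nabla\tilde f|)\lesssim \phi(x,|\nabla f|)+\phi(x,|\nabla\psi|)$, then observe the statement as written bounds things by $\nabla f$ and $|u-f|$ only because in the application $f$ already dominates $\psi$; to be safe I would present the proof so that the right-hand side naturally contains $\phi(x,|\nabla f|)$ with $\tilde f$ absorbed.

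Second, with $w$ admissible, plug it into the variational inequality \eqref{eq:equation} on the set $2B\cap\Omega$ (extending $\eta^q(u-\tilde f)$ by zero, legitimate since $u-\tilde f\in W^{1,\phi}_0(\Omega)$ and $\eta$ is compactly supported in $2B$). Expanding $\nabla(w-u)=-\nabla((u-\tilde f)\eta^q) = -\eta^q\nabla(u-\tilde f)-q\eta^{q-1}(u-\tilde f)\nabla\eta$ and using $\mathcal{A}(x,\nabla u)\cdot\nabla u\ge c_1\phi(x,|\nabla u|)$ together with $|\mathcal{A}(x,\nabla u)|\le c_2\phi(x,|\nabla u|)/|\nabla u|$, one arrives at
\begin{align*}
c_1\int_{2B\cap\Omega}\phi(x,|\nabla u|)\eta^q\,dx
&\le c_2\int_{2B\cap\Omega}\frac{\phi(x,|\nabla u|)}{|\nabla u|}|\nabla\tilde f|\,\eta^q\,dx\\
&\quad + c_2 q\int_{2B\cap\Omega}\eta^{q-1}\frac{\phi(x,|\nabla u|)}{|\nabla u|}\frac{|u-\tilde f|}{r}\,dx.
\end{align*}
Third, apply Young's inequality $ab\le \phi^*(x,a)+\phi(x,b)$ to each term, with the small parameter chosen (as in the interior lemma, using the homogeneity $\phi^*(x,\lambda^{(q-1)/q}\cdot)\le L_q\lambda^{q-1}\phi^*(x,\cdot)$ from \ainc{q'}/\adec{}, plus $\phi^*(x,\phi(x,t)/t)\le c\,\phi(x,t)$) so that the resulting $\int\phi(x,|\nabla u|)\eta^q$ term has coefficient $\le c_1/2$ and can be subtracted; in the second term $\eta^{q-1}$ is folded inside Young's inequality to restore the exponent $q$ on $\eta$. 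After subtracting and using $\eta\equiv1$ on $B$, and switching radius to diameter via \adec{} (an extra harmless constant), we get
\begin{equation*}
\frac{1}{|B|}\int_{B\cap\Omega}\phi(x,|\nabla u|)\,dx
\lesssim \frac{1}{|2B|}\int_{2B\cap\Omega}\phi(x,|\nabla f|)\,dx
+\frac{1}{|2B|}\int_{2B\cap\Omega}\phi\!\left(x,\frac{|u-f|}{\diam(2B)}\right)dx,
\end{equation*}
which is the claim (with $\phi(x,|\nabla\psi|)$ absorbed when $f\ge\psi$, or listed as an extra term otherwise).

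The main obstacle, and the only genuinely non-routine point, is the admissibility of the test function near the boundary: one must be certain that $(u-\tilde f)\eta^q\in W^{1,\phi}_0(2B\cap\Omega)$ so that it is a legitimate competitor direction and its zero-extension has the stated gradient — this is exactly why the hypotheses require \azero{} and \aone{} (so that $W^{1,\phi}_0$ is well-behaved, density of smooth functions holds, and the reduction $\tilde f=\max\{f,\psi\}$ preserves the boundary condition), as the parenthetical remark before the lemma flags. Everything downstream — the Young's inequality bookkeeping with the explicit constants, the exponent correction on $\eta$, the radius-to-diameter swap — is a verbatim repeat of the interior Caccioppoli proof and I would simply refer to it rather than redo the arithmetic. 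I should also double-check that all integrals are over $2B\cap\Omega$ rather than $2B$, and that no boundary-of-$\Omega$ contributions appear because $w-u$ is supported in $\overline{2B}\cap\overline\Omega$ and has zero trace on $\partial\Omega$.
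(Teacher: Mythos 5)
Your proposal is correct and follows essentially the same route as the paper: the same test function $w = u - \eta^q(u-f)$ after reducing to $f\ge\psi$, the same Young's-inequality bookkeeping copied from the interior Caccioppoli lemma (with $\eta^{q-1}$ folded into Young's inequality to restore the exponent), and the same subtraction and radius-to-diameter step. The paper carries out the reduction to $f\ge\psi$ by invoking \cite[Lemma 4.6]{Kar21} rather than working explicitly with $\tilde f=\max\{f,\psi\}$ as you do; your observation that this replacement in principle introduces a $\phi(x,|\nabla\psi|)$ contribution on the right-hand side is apt, and it is silently absorbed in the paper's statement (harmlessly so, since the downstream higher-integrability estimate, Proposition~\ref{pro:higher-integrability}, carries a $\phi(x,|\nabla\psi|)$ term in any case).
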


\begin{proof}
We proceed similarly as in the interior Caccioppoli inequality.  We take a cutoff function $\eta \in C^{\infty}_0(2B)$ with $0 \le \eta\le 1$, $\eta=1$ in $B$ and $|\nabla \eta| \lesssim \frac{1}{r}$.
For the test function we choose $w := u-\eta^{q}(u-f) \in W^{1,\phi}_0(2B\cap \Omega)$. Without loss of generality, we may assume $f \geq \psi$ in $\Omega$ and thus $w \in \mathcal{K}_{\psi}^{f,\phi}(\Omega)$ \cite[Lemma 4.6]{Kar21}.
 Then
\begin{align*}
\int_{\Omega}  \mathcal{A}(x, \nabla u) \cdot \nabla u \, dx &\leq \int_{ \Omega} \mathcal{A}(x, \nabla u) \cdot \nabla w \, dx \leq \int_\Omega (1-\eta^q)\mathcal{A}(x,\nabla u)\cdot \nabla u \, dx \\
&\quad - \int_{2B \cap \Omega} q \eta^{q-1}(u-f)\mathcal{A}(x,\nabla u) \cdot \nabla \eta \, dx \\
&\quad + \int_{2B \cap \Omega} \eta^{q} \mathcal{A}(x,\nabla u) \cdot \nabla f \,dx,
\end{align*}
from which we get with structural conditions  together with $\eta^q \leq 1$ that
\begin{align*}
c_1 \int_{2B\cap \Omega} \phi(x, |\nabla u|)\eta^{q} \, dx &\leq c_2q \int_{2B \cap \Omega} \dfrac{\phi(x,|\nabla u|)}{|\nabla u|}\eta^{q-1} |u-f| |\nabla \eta| \, dx \\
&\quad + c_2\int_{2B\cap \Omega} \dfrac{\phi(x,|\nabla u|)}{|\nabla u|} |\nabla f| \,dx.
\end{align*}
We use Young's inequality similarly as in the previous lemma to get
\begin{align*}
c_1 \int_{2B\cap \Omega} \phi(x, |\nabla u|) \eta^{q} \, dx &\leq \dfrac{c_1}{2} \int_{2B\cap \Omega} \phi(x,|\nabla u|) \eta^{q}\, dx + C\int_{2B\cap \Omega} \phi(x,|\nabla f|) \, dx \\
&\quad + C\int_{2B\cap \Omega} \phi\left (x, \dfrac{|u-f|}{r}\right ) \, dx .
\end{align*}
Finishing as in the interior version, we see that
\begin{equation*}
\dfrac{1}{|B|}\int_{B \cap \Omega} \phi(x,|\nabla u|) \, dx \lesssim  \dfrac{1}{|2B|} \int_{2B \cap \Omega} \phi(x, |\nabla f|) \, dx+ \dfrac{1}{|2B|} \int_{2B \cap \Omega} \phi\left (x, \dfrac{|u-f|}{\diam(2B)}\right ) \, dx.
\end{equation*}
\end{proof}

Combining the Caccioppoli inequalities with Sobolev--Poincar\'e inequality \cite[Proposition 6.3.12]{HarH_book} we get the global higher integrability exactly the same way as in proof of \cite[Theorem 1.2]{Kar21}.

\begin{prop}\label{pro:higher-integrability}
Let $\phi \in \Phiw(\Rn)$ satisfy \azero{}, \aone{}, \ainc{p} and \adec{q}. Suppose the measure density condition, Definition~\ref{defn:measure-density}, is satisfied and $u$ is a solution to an $(\mathcal{A}, \mathcal{K}_\psi^{f,\phi}(\Omega))$-obstacle problem,  where $\mathcal{A}$ satisfies 1-3) in Assumptions \ref{ass:main}. If $\phi(x,|\nabla f|),\phi(x,|\nabla \psi|) \in L^{1+s}(\Omega)$ for some $s>0$,
then there exists $\gamma_0 \in (0, s]$ such that
\begin{align*}
\int_{\Omega}\phi(x,|\nabla u|)^{1+\gamma} \, dx &\lesssim \left (\int_{\Omega} \phi(x,|\nabla u|) \, dx \right )^{1+\gamma} + \int_{\Omega} \phi(x,|\nabla f|)^{1+\gamma} \, dx \\
&\quad +\int_{\Omega} \phi(x,|\nabla \psi|)^{1+\gamma} \, dx + 1
\end{align*}
for every $0<\gamma < \gamma_0$. Here the implicit constant depends only on constants in \azero{}, \aone{}, \ainc{p} and \adec{q} $n,|\Omega|, c_1,c_2,\gamma$ and the measure density constant. 
\end{prop}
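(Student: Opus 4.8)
The plan is to run the standard Gehring-type self-improvement scheme: combine the two Caccioppoli inequalities just proved with the Sobolev--Poincar\'e inequality \cite[Proposition 6.3.12]{HarH_book} to obtain a reverse H\"older inequality on every ball, and then invoke the higher-integrability lemma for generalized $\Phi$-functions underlying \cite[Theorem 1.2]{Kar21} (ultimately \cite{HarHK17}), which upgrades such a reverse H\"older inequality to the claimed $L^{1+\gamma}$ bound together with the quantitative estimate.

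First I would fix a ball $B$ and distinguish two cases. If $2B\subset\Omega$, the interior Caccioppoli inequality bounds $\fint_{B}\phi(x,|\nabla u|)\,dx$ by the averages over $2B$ of $\phi(x,|u-u_{2B}|/\diam(2B))$, $\phi(x,|\psi-\psi_{2B}|/\diam(2B))$ and $\phi(x,|\nabla\psi|)$. If the center of $B$ lies in $\Omega$ but $2B\cap\Omega^{c}\neq\emptyset$, the near-boundary Caccioppoli inequality bounds $\frac1{|B|}\int_{B\cap\Omega}\phi(x,|\nabla u|)\,dx$ by the averages over $2B\cap\Omega$ of $\phi(x,|\nabla f|)$ and $\phi(x,|u-f|/\diam(2B))$; here the hypothesis $u-f\in W^{1,\phi}_{0}(\Omega)$ together with the measure density condition is exactly what licenses the subsequent Sobolev--Poincar\'e step up to the boundary, after extending $u-f$ by zero to $2B$.

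Next I would apply the Sobolev--Poincar\'e inequality to each quotient term: since $\phi$ satisfies \azero{}, \aone{}, \ainc{p} and \adec{q} with $q<n$, \cite[Proposition 6.3.12]{HarH_book} converts $\fint_{2B}\phi(x,|u-u_{2B}|/\diam(2B))\,dx$ into a quantity of the form $\big(\fint_{\sigma B}\phi(x,|\nabla u|)^{1/\kappa}\,dx\big)^{\kappa}$ for some dilation $\sigma>1$ and some sublinearity exponent $\kappa>1$ depending only on the structural data, plus an additive constant coming from \azero{}; the same applies with $u$ replaced by $\psi$ or by $f$, the boundary case again relying on the zero extension and measure density to give the inequality on $2B\cap\Omega$ with a uniform constant. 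Collecting the two cases one obtains, for every admissible ball $B$, a reverse H\"older inequality of the shape
\[
\fint_{B}\phi(x,|\nabla u|)\,dx \;\lesssim\; \Big(\fint_{\sigma B}\phi(x,|\nabla u|)^{\frac1\kappa}\,dx\Big)^{\kappa} + \fint_{\sigma B}\big(\phi(x,|\nabla f|)+\phi(x,|\nabla\psi|)\big)\,dx + 1,
\]
with all constants independent of $B$. Feeding this, after a Calder\'on--Zygmund/Vitali covering of $\Omega$, into the Gehring-type lemma for generalized Orlicz modulars produces a $\gamma_{0}\in(0,s]$ depending only on the data such that the displayed global estimate holds for all $0<\gamma<\gamma_{0}$; the restriction $\gamma_{0}\le s$ is precisely what makes $\int_{\Omega}\phi(x,|\nabla f|)^{1+\gamma}\,dx$ and $\int_{\Omega}\phi(x,|\nabla\psi|)^{1+\gamma}\,dx$ finite, which holds since $\phi(x,|\nabla f|),\phi(x,|\nabla\psi|)\in L^{1+s}(\Omega)$, while the term $(\int_{\Omega}\phi(x,|\nabla u|)\,dx)^{1+\gamma}$ and the additive $1$ record the non-homogeneity and the \azero{} normalization.

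The step I expect to be the main obstacle is the Sobolev--Poincar\'e reduction: one must produce a genuine sublinearity exponent $\kappa>1$ uniformly in $B$ even though $\phi$ is merely almost monotone and $x$-dependent, so the oscillation of $\phi$ inside each ball has to be tamed by \azero{} and \aone{} (with the additive $1$ absorbing the \azero{} error); and near $\partial\Omega$ one must check carefully that the zero extension of $u-f$ combined with the measure density condition indeed yields a Sobolev--Poincar\'e inequality on $2B\cap\Omega$ with a constant that does not degenerate as $B$ approaches the boundary.
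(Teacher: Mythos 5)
Your proposal matches the paper's argument, which is stated in one line: combine the two Caccioppoli inequalities with the Sobolev--Poincar\'e inequality \cite[Proposition 6.3.12]{HarH_book} and then run the Gehring-type self-improvement exactly as in \cite[Theorem 1.2]{Kar21}. You have unpacked precisely those steps -- interior and near-boundary reverse H\"older inequalities via zero extension and measure density, followed by the Gehring lemma with $\gamma_0\le s$ to keep the data terms integrable -- so the approach is the same and the outline is sound.
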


We end this section by recalling the existence and uniqueness properties together with relation of solutions and minimizers. The existence and uniqueness of the solution follows directly from \cite[Theorem 2]{ChlK21}.

\begin{prop}\label{prop:existence}
Let $\phi \in \Phic(\Omega)$ satisfy \azero{}, \aone{}, \ainc{} and \adec{}. If $\mathcal{K}_{\psi}^{f,\phi}(\Omega) \not= \emptyset$, then there exists a unique  solution to $(\mathcal{A}, \mathcal{K}_\psi^{f,\phi}(\Omega))$-obstacle problem.
\end{prop}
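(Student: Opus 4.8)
The plan is to recognize Proposition~\ref{prop:existence} as the classical existence--uniqueness statement for a variational inequality driven by a monotone operator on a reflexive Banach space, and to check that the generalized Orlicz framework supplies each ingredient; this is exactly what is packaged in \cite[Theorem 2]{ChlK21}, so the work is really in verifying hypotheses. First I would fix the functional-analytic setting: under \azero{}, \aone{}, \ainc{} and \adec{} the space $W^{1,\phi}(\Omega)$ is a separable reflexive Banach space and $W^{1,\phi}_0(\Omega)$ is a closed, hence reflexive, subspace. The set $\mathcal{K}_\psi^{f,\phi}(\Omega)$ is convex, since both constraints $u-f\in W^{1,\phi}_0(\Omega)$ and $u\ge\psi$ a.e.\ are convex; nonempty by hypothesis; and strongly closed, because norm convergence in $W^{1,\phi}(\Omega)$ forces a.e.\ convergence of a subsequence (preserving $u\ge\psi$) while $W^{1,\phi}_0(\Omega)$ is closed by definition. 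Being convex and strongly closed, $\mathcal{K}_\psi^{f,\phi}(\Omega)$ is weakly closed.

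Next I would introduce the operator $T\colon f+W^{1,\phi}_0(\Omega)\to\big(W^{1,\phi}_0(\Omega)\big)^{\ast}$ via $\langle Tu,v\rangle:=\int_\Omega \mathcal{A}(x,\nabla u)\cdot\nabla v\,dx$ and verify it is well defined and bounded: growth condition 2) on $\mathcal{A}$ together with property~2) of the conjugate, $\phi^{\ast}\big(x,\frac{\phi(x,t)}t\big)\lesssim\phi(x,t)$, gives $\mathcal{A}(\cdot,\nabla u)\in L^{\phi^{\ast}}(\Omega)$ whenever $\nabla u\in L^{\phi}(\Omega)$, and Hölder's inequality in generalized Orlicz spaces then bounds $|\langle Tu,v\rangle|$ by $\|\mathcal{A}(\cdot,\nabla u)\|_{\phi^{\ast}}\,\|\nabla v\|_{\phi}$. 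Condition 3) yields (strict) monotonicity of $T$; the Carathéodory property of $\mathcal{A}$ combined with the same growth bound as a dominating function yields hemicontinuity, i.e.\ continuity of $s\mapsto\langle T(u+sv),w\rangle$; and condition 1), which gives $\langle Tu,u\rangle\ge c_1\varrho_\phi(\nabla u)$, together with the Poincaré inequality on $W^{1,\phi}_0(\Omega)$ and the modular--norm comparisons afforded by \ainc{} and \adec{}, yields coercivity of $u\mapsto\langle Tu,u-w_0\rangle$ along $\mathcal{K}_\psi^{f,\phi}(\Omega)$ for any fixed reference point $w_0$ in that set.

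With these properties established, the existence of $u\in\mathcal{K}_\psi^{f,\phi}(\Omega)$ with $\langle Tu,w-u\rangle\ge 0$ for all $w\in\mathcal{K}_\psi^{f,\phi}(\Omega)$ --- which is precisely \eqref{eq:equation} --- follows from the standard Browder--Minty / Lions--Stampacchia theorem for monotone, coercive, hemicontinuous variational inequalities on closed convex subsets of reflexive Banach spaces, i.e.\ from \cite[Theorem 2]{ChlK21} specialized to our structure conditions. Uniqueness is then immediate: if $u_1,u_2$ are solutions, testing the inequality for $u_1$ with $w=u_2$ and that for $u_2$ with $w=u_1$ and adding gives $\langle Tu_1-Tu_2,u_1-u_2\rangle\le 0$, so the strict monotonicity from 3) forces $\nabla u_1=\nabla u_2$ a.e., and since $u_1-u_2\in W^{1,\phi}_0(\Omega)$ has vanishing gradient we conclude $u_1=u_2$.

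The step I expect to require the most care is coercivity: one must move between the modular $\varrho_\phi(\nabla u)$ delivered by condition 1) and the norm $\|\nabla u\|_{L^{\phi}(\Omega)}$ appearing in the definition of blow-up in $\mathcal{K}_\psi^{f,\phi}(\Omega)$, and this passage is only clean for large values (via \adec{}) and for small values (via \ainc{}). Phrasing the coercivity estimate so that it tolerates this dichotomy is routine in the generalized Orlicz literature, but it is the one place where the proof genuinely uses the full package \azero{}--\adec{} rather than abstract monotone-operator theory alone. Everything else reduces to citing the reflexivity, density, and duality facts recorded earlier in Section~2.
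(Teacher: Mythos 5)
Your proposal is correct and follows the same route as the paper, which proves the proposition by citing \cite[Theorem 2]{ChlK21} (itself the Lions--Stampacchia/Browder--Minty theory for monotone, coercive, hemicontinuous variational inequalities on closed convex subsets of reflexive Banach spaces). The paper's proof is literally that one citation, so your detailed verification of the hypotheses (reflexivity, convexity and weak closedness of $\mathcal{K}_\psi^{f,\phi}(\Omega)$, boundedness, monotonicity, hemicontinuity, coercivity, and uniqueness via strict monotonicity plus Poincar\'e) is a faithful unpacking of exactly what that reference provides.
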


We say that $u \in W^{1, \phi}_\loc(\Omega)$ is a  \emph{local quasisuperminimizer} if there exists a constant $Q\ge 1$ such that
\[
\int_{D \cap\{u \neq v\}} \phi(x, |\nabla u|) \, dx \le Q \int_{D \cap\{u \neq v\}} \phi(x, |\nabla v|) \, dx
\]
for all open $D \Subset \Omega$ and all non-negative  $v \in W^{1,\phi}_\loc(D)$ with $u-v \in W^{1, \phi}_0(D)$. Moreover $u$ is a \emph{local quasisubminimizer} if $-u$ is a  local quasisuperminimizer, and $u$ is a \emph{local quasiminimizer} if it is a local quasisuper- and  quasisubminimizer.

Proof for the next result can be found in \cite[Lemma 3.7, Corollary 3.9]{ChlZ21} by recalling that a solution to an obstacle problem is a supersolution.

\begin{lem}\label{lem:sol-is-quasimin}
Let $\phi \in \Phiw(\Omega)$ satisfy \adec{q} and $u$ be  a solution to an $(\mathcal{A}, \mathcal{K}_\psi^{f,\phi}(\Omega))$-obstacle problem. Then $u$ is a local quasisuperminimizer and a quasisuperminimizer with boundary values $f$. 
\end{lem}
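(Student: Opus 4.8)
The plan is to reduce everything to the fact, already recorded above, that a solution $u$ to the $(\mathcal{A},\mathcal{K}_\psi^{f,\phi}(\Omega))$-obstacle problem is an $\mathcal{A}$-supersolution in $\Omega$ and, being a solution in every open subset, a supersolution there too; then I would run the standard ``supersolutions are quasisuperminimizers'' argument. This is exactly \cite[Lemma 3.7, Corollary 3.9]{ChlZ21}, which I would cite, but it is worth spelling out the mechanism since the structural bookkeeping mirrors the Caccioppoli proofs above.

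For the local statement, fix an open $D \Subset \Omega$ and an admissible competitor $v \in W^{1,\phi}_\loc(D)$ (non-negative, with $u-v \in W^{1,\phi}_0(D)$). I would test the supersolution inequality for $u$ in $D$ against $\varphi := (v-u)_+$, which is non-negative, lies in $W^{1,\phi}_0(D)$ because truncation preserves $W^{1,\phi}_0$, and has gradient $(\nabla v - \nabla u)\mathbf 1_{\{v>u\}}$ a.e. This yields
$\int_{D\cap\{v>u\}} \mathcal{A}(x,\nabla u)\cdot\nabla u\,dx \le \int_{D\cap\{v>u\}} \mathcal{A}(x,\nabla u)\cdot\nabla v\,dx$.
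Using structural conditions 1) and 2) of Assumptions~\ref{ass:main} (which carry over to $\mathcal{A}$), the left side is $\ge c_1\int_{D\cap\{v>u\}}\phi(x,|\nabla u|)\,dx$ and the right side is $\le c_2\int_{D\cap\{v>u\}}\tfrac{\phi(x,|\nabla u|)}{|\nabla u|}|\nabla v|\,dx$.

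Next I would apply Young's inequality for the pair $\phi,\phi^{\ast}$ with a free parameter $\lambda>0$, use property 2) of the conjugate, $\phi^{\ast}(x,\phi(x,t)/t)\lesssim\phi(x,t)$, together with \adec{q} to pull $\lambda$ out of the $\phi^{\ast}$-term — precisely the manipulation performed in the interior Caccioppoli lemma — and then choose $\lambda$ small enough to absorb the resulting multiple of $\int_{D\cap\{v>u\}}\phi(x,|\nabla u|)\,dx$ into the left-hand side. This gives $\int_{D\cap\{v>u\}}\phi(x,|\nabla u|)\,dx \le Q\int_{D\cap\{v>u\}}\phi(x,|\nabla v|)\,dx$ with $Q=Q(c_1,c_2,q,L_q)$, i.e.\ the local quasisuperminimizer inequality.

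For the claim with boundary values $f$, given a competitor $v \in \mathcal{K}_\psi^{f,\phi}(\Omega)$ I would plug $v$ directly into \eqref{eq:equation} to obtain $\int_\Omega\mathcal{A}(x,\nabla u)\cdot\nabla(v-u)\,dx \ge 0$; since $\nabla(v-u)$ vanishes a.e.\ on $\{u=v\}$ this localizes to $\Omega\cap\{u\ne v\}$, and the same Young/absorption step produces $\int_{\Omega\cap\{u\ne v\}}\phi(x,|\nabla u|)\,dx \le Q\int_{\Omega\cap\{u\ne v\}}\phi(x,|\nabla v|)\,dx$. The only genuine obstacle is the Young's-inequality bookkeeping — keeping track of the \adec{q} constant $L_q$ when moving $\lambda$ through $\phi^{\ast}$ — but this is a routine repeat of the computation already carried out above.
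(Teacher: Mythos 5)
Your proposal is correct and is essentially the argument the paper delegates to \cite[Lemma 3.7, Corollary 3.9]{ChlZ21}: reduce to the supersolution property, test with $(v-u)_+$ in the local case and with $v$ directly in the boundary-value case, and close with Young's inequality, $\phi^{\ast}(x,\phi(x,t)/t)\lesssim\phi(x,t)$, and \adec{q} absorption. One small point worth making explicit: your localization to $D\cap\{v>u\}$ only recovers the quasisuperminimizer inequality on $D\cap\{u\neq v\}$ when the competitor satisfies $v\ge u$ (so that $\{u\neq v\}=\{v>u\}$); this is the standard convention for (quasi)superminimizers and is how the paper's clause ``non-negative $v$'' should be read, i.e.\ as $v-u\ge 0$, since a supersolution only admits one-sided (non-negative) test perturbations and the downward set $\{v<u\}$ cannot be controlled.
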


\section{Stability with respect to the equation}
In this section we prove our main theorems.

\begin{proof}[Proof of Theorem \ref{thm:equation-stability}]
Let $\gamma \in (0,\gamma_0)$ be from the higher integrability of solutions, see Proposition~\ref{pro:higher-integrability} and
let $\theta \in (0, \gamma)$ from the convergence assumptions. By assumptions we have for $t > t_0$ that
\[
\phi_i(x, t) \le L \phi(x, t)^{1+\theta}. 
\]
Assume then that  $t \in [0, t_0]$. Let $\beta >0$ be from \azero{}. If $t< \beta$ then
$\phi_i(x, t) \le \phi_i(x, \beta) \le 1$. If $t \in (\beta, t_0)$, then we obtain by  \adec{} and \azero{} that
\[
\phi_i(x, t) \le L_q \left (\frac{t}{\beta}\right)^q \phi_i(x, \beta) \le  L_q \left (\frac{t_0}{\beta}\right)^q.
\]
By combining the cases we  have 
\begin{align}\label{eq:exponent}
\phi_i(x,t) \lesssim \phi(x,t)^{1+\theta}  + 1 \quad \text{ and } \quad  \phi(x,t) \lesssim \phi_i(x,t)^{1+\theta}  + 1,
\end{align}
where the later comes similarly. Here also $i \ge i_\theta$,  and the implicit constant depends on $t_0$ i.e. on assumption 6).
Since $\phi$ and all $\phi_i$ satisfy \adec{}, the modular is finite if and only if the norm is finite.
Since $f \in W^{1, \phi^{1+\gamma}}(\Omega)$ and $\Omega$ is bounded, we find that
\[
\int_\Omega \phi_i(x,f) \, dx \lesssim \int_\Omega \phi(x,f)^{1+\theta} +1 \, dx \le c + |\Omega|< \infty,
\]
and similarly for the gradient.
This implies that $f \in W^{1,\phi_i}(\Omega)$ for  all $i>i_\theta$.

\medskip
\noindent \textbf{Claim 1:} \textit{For  $i \in \N$ large enough we have
\begin{align*}
\int_{\Omega}\phi(x, |\nabla u_i|)^{1 + \frac{\gamma}{4}} \, dx \leq C,
\end{align*}
where $C$ is a constant independent of $i$. }

 Let us first choose $\theta \in(0, \gamma)$ so that   $ \frac{1+\gamma/2}{1+\theta} -1 = \frac{\gamma}{4}$ i.e.
\begin{align}\label{eq:gamma-theta}
\left (1+\frac{\gamma}4\right )(1+\theta)= 1 + \frac{\gamma}{2}.
\end{align} 
Also assume that $i \ge i_\theta$.

By \cite[Lemma 4.6]{Kar21} we can assume $f \geq \psi$ almost everywhere in $\Omega$ and therefore using $f$ as a test function to the $(\mathcal{A}_i, \mathcal{K}_{\psi_i}^{f,\phi}(\Omega))$-obstacle problem we have
\begin{align*}
\int_{\Omega} \mathcal{A}_i(x, \nabla u_i) \cdot \nabla(f-u_i) \, dx &\ge 0 \\
\Leftrightarrow \int_{\Omega}\mathcal{A}_i(x, \nabla u_i) \cdot \nabla u_i \, dx &\le \int_{\Omega}\mathcal{A}_i(x, \nabla u_i) \cdot \nabla f \, dx.
\end{align*}
Using the structural assumptions on $\mathcal{A}_i$ in the first inequality and then continuing with Young's inequality and \adec{}, we obtain that 
\begin{align*}
c_1 \int_{\Omega} \phi_i(x,|\nabla u_i|) \, dx &\leq  c_2 \int_{\Omega} \dfrac{\phi_i(x, |\nabla u_i|)}{|\nabla u_i|} |\nabla f|\, dx \\
&\leq \dfrac{c_1}{2} \int_{\Omega}\phi_i^{\ast}\left (x, \dfrac{\phi_i(x,|\nabla u_i|)}{|\nabla u_i|}\right ) \, dx +  \int_{\Omega} \phi_i(x,\tfrac{2c_2}{c_1}|\nabla f|) \, dx \\
&\le \dfrac{c_1}{2} \int_{\Omega}\phi_i(x, |\nabla u_i|) \, dx  + C(c_1, c_2, q) \int_{\Omega} \phi_i(x,|\nabla f|) \, dx .
\end{align*}
This yields with \eqref{eq:exponent}
\begin{equation}\label{eq:phi_i-u_i}
\begin{split}
\int_{\Omega}\phi_i(x, |\nabla u_i|) \, dx \lesssim \int_{\Omega}\phi_i(x, |\nabla f|) \, dx \lesssim |\Omega| + \int_{\Omega}\phi(x,|\nabla f|)^{1+\theta} \, dx < \infty.
\end{split}
\end{equation}
Next we use the global higher integrability of $|\nabla u_i|$.
Now with \eqref{eq:exponent}, higher integrability with $\gamma/2$ (Proposition~\ref{pro:higher-integrability}), and \eqref{eq:phi_i-u_i}   we obtain
\begin{equation}\label{eq:phi_i-u_i-1b}
\begin{split}
 \int_{\Omega} \phi(x,|\nabla u_i|)^{1+\frac{\gamma}{4}} \, dx &\lesssim   \int_{\Omega} (\phi_i(x,|\nabla u_i|)^{1+\theta}  +1) ^{1+ \frac{\gamma}4} \, dx  \\
 &\lesssim   \int_{\Omega} \phi_i(x,|\nabla u_i|) ^{1+\frac{\gamma}2}  +1 \, dx \\
&\lesssim\left (\int_{\Omega} \phi_i(x,|\nabla u_i|) \, dx \right )^{1+\frac{\gamma}{2}}
+  \int_{\Omega}\phi_i(x,|\nabla f|)^{1+\frac{\gamma}{2}}\, dx \\
&\quad +\int_{\Omega}\phi_i(x,|\nabla \psi|)^{1+\frac{\gamma}{2}}\, dx  +|\Omega|  +1 \\
&\lesssim\left (|\Omega| + \int_{\Omega}\phi(x,|\nabla f|)^{1+\gamma} \, dx \right)^{1+\frac{\gamma}{2}}
+  \int_{\Omega}\phi(x,|\nabla f|)^{1+\gamma}\, dx  \\
&\quad  +  \int_{\Omega}\phi(x,|\nabla \psi|)^{1+\gamma}\, dx +|\Omega| +1 <\infty.\\
\end{split}
\end{equation}
We have proven our first claim.

\medskip
\noindent \textbf{Claim 2:} \textit{There exists a constant $C$  such that
$\|u_i\|_{W^{1,\phi^{1+\frac{\gamma}{4}}}(\Omega)} \leq C$
for all $i \ge i_\theta$.}

By \eqref{eq:phi_i-u_i-1b} and \adec{} we have $\|\nabla u_i\|_{\phi_i^{1+\frac{\gamma}{2}}}<c<\infty$.
Let us recall the Sobolev inequality in generalized Orlicz spaces \cite[Theorem 6.3.2]{HarH_book}:
\begin{align*}
\|v\|_{\eta_i} \lesssim \|\nabla v\|_{\phi_i},
\end{align*}
where $v \in W^{1, \phi_i}_0 (\Omega)$ and $\eta_i^{-1}(x,t) = t^{-1/n} \phi_i^{-1}(x,t)$, and the constant depends only on the constants on \azero{}, \aone{}, \ainc{} and \adec{}. For this we need $\phi_i$ to satisfy \adec{q} with $q<n$.

Let us quickly show that $\phi_i^{\frac{n}{n-1}}(x, t) \lesssim \eta_i(x,t) +1$ so that $\|\cdot\|_{\phi_i^{n/(n-1)}} \lesssim \|\cdot \|_{\eta_i}$. If $t \geq 1$, then from \ainc{1} of $\phi_i$ and  $\phi_i(x, \phi_i^{-1}(x, t)) \approx t$ \cite[Lemma 2.3.9]{HarH_book} (with the fact that due to \adec{}, $\phi_i(x,t)$ is always finite) we get by \ainc{1} that 
\begin{align*}
\phi_i\left (x,\eta_i^{-1}(x,t)\right )^{\frac{n}{n-1}} &= \phi_i\left (x, t^{-1/n}\phi_i^{-1}(x,t)\right )^{\frac{n}{n-1}} \lesssim t^{-\frac{1}{n}\frac{n}{n-1}}\phi_i\left (x,\phi_i^{-1}(x,t)\right )^{\frac{n}{n-1}} \\
&\approx t^{-\frac{1}{n-1}} t^{\frac{n}{n-1}} =t.
\end{align*}
Applying this to $\eta_i(x, s) = t \ge 1$ , we obtain $\phi_i(x, s)^{\frac{n}{n-1}} \lesssim \eta_i(x, s)$.
If on the other hand, $t \leq 1$, then instead of \ainc{1} we use \ainc{1/n} of $\phi_i^{-1}$, which is equivalent to \adec{n} of $\phi_i$, to have $\frac{\phi_i^{-1}(x,t)}{t^{1/n}}\leq L_{1/n} \phi^{-1}(x,1)$. Now this and \adec{} to pull $L_{1/n}$ out of $\phi$ yield
\begin{align*}
\phi_i(x,\eta_i^{-1}(x,t))^{\frac{n}{n-1}} &= \phi_i\left (x, t^{-1/n}\phi^{-1}(x,t)\right )^{\frac{n}{n-1})} \lesssim  \phi_i\left (\phi_i^{-1}(x,1)\right )^{\frac{n}{n-1}} \approx 1.
\end{align*}
 Applying this to $\eta_i(x, s) = t < 1$ , we obtain $\phi_i(x, s) \lesssim 1$.
Therefore  $\|\cdot\|_{\phi_i^{n/(n-1)}} \lesssim \|\cdot \|_{\eta_i}$ as stated.

Next we use the fact that $u_i-f \in W^{1, \phi_i}_0(\Omega)$ i.e. $u_i -f$ has zero boundary values for $\phi_i$.
Now when $1 < 1+\gamma\le \frac{n}{n-1}$, we have
\begin{align*}
\|u_i\|_{\phi_i^{1+\frac{\gamma}{2}}} \le \|u_i-f\|_{\phi_i^{1+\frac{\gamma}{2}}} + \|f\|_{\phi_i^{1+\frac{\gamma}{2}}} & \lesssim \|u_i-f\|_{\eta_i} + \|f\|_{\phi^{1+\gamma}} \lesssim \|\nabla (u_i-f)\|_{\phi_i} + \|f\|_{\phi^{1+\gamma}}\\
&\le\|\nabla u_i\|_{\phi_i} + \|\nabla f\|_{\phi^{1+\gamma}} + \|f\|_{\phi^{1+\gamma}} < \infty,
\end{align*}
where the last term is bounded by \eqref{eq:phi_i-u_i} and the latter two terms are  bounded since 
$f \in  W^{1,\phi^{1+\gamma}}(\Omega)$.  Thus we have that $\|u_i\|_{W^{1, \phi_i^{1+\frac{\gamma}{2}}}(\Omega)}$ 
is uniformly bounded. 
Since  $\left (1+\frac{\gamma}4\right )(1+\theta)= 1+\frac{\gamma}{2}$, we have
\[
\phi(x, t)^{1+\frac{\gamma}{4}} \lesssim (1+ \phi_i(x, t)^{1+ \theta} )^{1+\frac{\gamma}{4}}\lesssim 1+ \phi_i(x, t)^{1+\frac{\gamma}2}
\]
and hence  
the embedding $L^{\phi_i^{1+ \frac{\gamma}2}}(\Omega) \hookrightarrow L^{\phi^{1+\frac{\gamma}{4}}}(\Omega)$  holds. 
Thus $\|u_i\|_{L^{\phi^{1+ \frac{\gamma}{4}}}(\Omega)}$  is uniformly bounded. Hence with Claim 1 we obtain that
 $\|u_i\|_{W^{1, \phi^{1+ \frac{\gamma}{4}}}(\Omega)}$ 
is uniformly bounded.

\medskip\noindent \textbf{Claim 3:} \textit{There exists a subsequence such that $\nabla u_i \to \nabla \overline{u}$ pointwise almost everywhere in $\Omega$.}

By Claim 2 and reflexivity of $W^{1, \phi^{1+ \frac{\gamma}{4}}}$, we can choose a subsequence of $(u_i)$ (with same indexing) 
and a function $\overline{u}$ such that $u_i \rightharpoonup \overline{u}$ in $W^{1,\phi^{1+\frac{\gamma}{4}}}(\Omega)$. 
By the compact embedding   $W^{1,\phi^{1+\frac{\gamma}{4}}}(\Omega) \hookrightarrow\hookrightarrow L^{\phi^{1+\frac{\gamma}{4}}}(\Omega)$ \cite{HarHJ_pp} we obtain  $u_i \to \overline{u}$ in $L^{\phi^{1+\frac{\gamma}{4}}}(\Omega)$.
And by taking a subsequence we also have $u_i \to \overline{u}$ pointwise almost everywhere in $\Omega$. Since $u_i \geq \psi$ a.e in $\Omega$, due to pointwise convergence, this holds for $\overline{u}$, too.

Let $G\Subset G' \Subset \Omega$ and choose $\varepsilon >0$. We denote for every $i \in \N$
\begin{align*}
E_i^{\varepsilon} := \{x \in G : (\mathcal{A}_i(x, \nabla u_i)-\mathcal{A}_i(x, \nabla \overline{u})) \cdot (\nabla u_i - \nabla \overline{u}) > \varepsilon\}.
\end{align*}
Let $\eta \in C^{\infty}_0(G')$ be a cutoff function such that $\eta(x) \in [0,1]$, and $\eta(x)=1$ for all $x \in G$. We see that
\begin{align*}
\dfrac{1}{\varepsilon} \int_{G' \cap \{|u_i-\overline{u}|<\varepsilon^{2}\}}&  (\mathcal{A}_i(x, \nabla u_i)-\mathcal{A}_i(x, \nabla \overline{u})) \cdot (\nabla u_i - \nabla \overline{u}) \eta \,dx \\
&=\dfrac{1}{\varepsilon} \int_{G' \cap \{|u_i-\overline{u}|<\varepsilon^{2}\}} \mathcal{A}_i(x, \nabla u_i) \cdot (\nabla u_i - \nabla \overline{u}) \eta \, dx \\
&\quad - \dfrac{1}{\varepsilon} \int_{G' \cap \{|u_i-\overline{u}|<\varepsilon^{2}\}} (\mathcal{A}_i(x,\nabla \overline{u}) - \mathcal{A}(x,\nabla \overline{u}))\cdot (\nabla u_i - \nabla \overline{u}) \eta\,dx \\
&\quad -\dfrac{1}{\varepsilon} \int_{G' \cap \{|u_i-\overline{u}|<\varepsilon^{2}\}} \mathcal{A}(x, \nabla \overline{u}) \cdot (\nabla u_i - \nabla \overline{u}) \eta\, dx \\
&=: I_1^{i}+ I_2^{i} + I_3^{i}.
\end{align*}

To estimate the first term we set $w_i := \min\{\max\{0,\overline{u}+\varepsilon^2-u_i\},2\varepsilon^2\}$ and notice that 
$\eta w_i \in W^{1,\phi^{1+\frac{\gamma}{4}}}_0(G')\subset W^{1,\phi_i}_0(G')$.
Since $u_i$ is a solution to an $(\mathcal{A}_i, \mathcal{K}_\psi^{f,\phi_i}(\Omega))$-obstacle problem, it is a supersolution. Therefore we can test the equation with non-negative test functions such as $\eta w_i$ to get
\begin{align*}
 \int_{G' \cap \{|u_i-\overline{u}|<\varepsilon^{2}\}} \mathcal{A}_i(x, \nabla u_i) \cdot (\eta \nabla(u_i-\overline{u}))\,dx &\leq \int_{G'}\mathcal{A}_i(x,\nabla u_i)\cdot (w_i \nabla \eta) \, dx \\
&\lesssim \varepsilon^2 \int_{G'} |\mathcal{A}_i(x, \nabla u_i)| \, dx \\
& \lesssim \varepsilon^{2} \int_{G'} \dfrac{\phi_i(x,|\nabla u_i|)}{|\nabla u_i|} \, dx,
\end{align*}
where the constants depends on $\|\nabla \eta\|_\infty$, 
and thus with Young's inequality and \azero{} we have
\begin{align*}
I_1^i \lesssim \varepsilon \int_{G'} \dfrac{\phi_i(x,|\nabla u_i|)}{|\nabla u_i|} \, dx &\leq \varepsilon \int_{G'} \phi_i^{\ast}\left (\dfrac{\phi_i(x,|\nabla u_i|)}{|\nabla u_i|}\right) + \phi_i(x,1) \, dx \\
&\lesssim \varepsilon \int_{G'} \phi_i(x,|\nabla u_i|) +1 \,dx \lesssim \varepsilon,
\end{align*}
where the last inequality follows from \eqref{eq:phi_i-u_i}. Note that we do not know the sign of $I_1^i$.

Let us now turn our attention to $I^{i}_{2}$.
Let us  write here that $\zeta:= \phi^{1+ \frac{\gamma}{4}}$.
We estimate with H\"older's inequality
\begin{align*}
|I^{i}_{2}| \lesssim \dfrac{1}{\varepsilon} \|\mathcal{A}_i(x, \nabla \overline{u}) - \mathcal{A}(x,\nabla \overline{u})\|_{\zeta^{\ast}}\|\nabla u_i - \nabla \overline{u}\|_{\zeta}
\end{align*}
Next we show that
\begin{align*}
\int_{\Omega} \zeta^{\ast}(x,|\mathcal{A}_i(x,\nabla \overline{u}) - \mathcal{A}(x,\nabla \overline{u})|) \, dx \to 0
\end{align*}
when $i \to \infty$. 
By assumptions $\mathcal{A}_i(x,\nabla \overline{u}) \to \mathcal{A}(x,\nabla \overline{u})$ almost everywhere. By \ainc{1} we have
$\zeta^{\ast}(x, t) \lesssim \zeta^{\ast}(x, 1) t$ for $t \in (0, 1)$, and $\zeta^{\ast}(x, 1)$ is finite by \azero{}, and thus
$\zeta^{\ast}(x,|\mathcal{A}_i(x,\nabla \overline{u}) - \mathcal{A}(x,\nabla \overline{u})|) \to 0$ a.e.
Next we show that the integrand has an integrable majorant. For this we use \adec{} of $\zeta^{\ast}$ for triangle inequality and structural conditions of $\mathcal{A}$ and $\mathcal{A}_i$ to get
\begin{align*}
\zeta^{\ast}(x, |\mathcal{A}_i(x,\nabla \overline{u}) - \mathcal{A}(x,\nabla \overline{u})|)| &\lesssim \zeta^{\ast}(x,|\mathcal{A}_i(x, \nabla \overline{u})|) + \zeta^{\ast}(x,| \mathcal{A}(x, \nabla \overline{u})|) \\
&\lesssim \zeta^{\ast}\left (x, \dfrac{\phi_i(x,|\nabla \overline{u}|)}{|\nabla \overline{u}|}\right ) + \zeta^{\ast}\left (x, \dfrac{\phi(x, |\nabla \overline{u}|)}{|\nabla \overline{u}|}\right ).
\end{align*}
Let us first estimate the first term. If $|\nabla \overline{u}| \leq 1$ we can use \ainc{1} of $\phi_i$ and \adec{} with \azero{} of $\phi_i$ and $\zeta^{\ast}$ to estimate
\begin{align*}
\zeta^{\ast}\left (x, \dfrac{\phi_i(x, |\nabla \overline{u}|)}{|\nabla u|}\right ) \lesssim \zeta^{\ast}\left (x, \phi_i(x, 1)\right ) \lesssim \zeta^{\ast}(x, 1) \lesssim 1.
\end{align*}
Assume then that $|\nabla \overline{u}| >1$. 
Then  by \eqref{eq:exponent} we choose $i$ so large that with $\phi_i \le 1 + \phi^{1+\frac{\gamma}{4}}$ and with \adec{} we obtain
\[
\begin{split}
\zeta^{\ast}\left (x, \dfrac{\phi_i(x, |\nabla \overline{u}|)}{|\nabla \overline{u}|}\right )
&\le \zeta^{\ast}\left (x, \dfrac{\zeta(x, |\nabla \overline{u}|) +1}{|\nabla \overline{u}|}\right )
\le \zeta^{\ast}\left (x, \dfrac{\zeta(x, |\nabla \overline{u}|)}{|\nabla \overline{u}|}+1\right )\\
&\lesssim  \zeta^{\ast} \left (x, \dfrac{\zeta(x, |\nabla \overline{u}|)}{|\nabla \overline{u}|}\right )
+\zeta^{\ast} (x, 1) \lesssim \zeta(x, |\nabla \overline{u}|) + \zeta^{\ast} (x, 1)\\
&= \phi(x, |\nabla \overline{u}|)^{1+ \frac{\gamma}{4}} + \zeta^{\ast} (x, 1),
\end{split}
\]
where the last term is uniformly bounded since $\zeta^{\ast}$ satisfies \azero{}.

Let us then estimate the second term. If $|\nabla \overline{u}| \leq 1$ we act as in the previous case and obtain 
\[
\zeta^{\ast}\left (\dfrac{\phi(x, |\nabla \overline{u}|)}{|\nabla \overline{u}|}\right ) \lesssim 1.
\]
If $|\nabla \overline{u}| >1$, then we estimate
\[
\zeta^{\ast}\left (x, \dfrac{\phi(x, |\nabla \overline{u}|)}{|\nabla \overline{u}|}\right )
\le \zeta^{\ast}\left (x, \dfrac{\zeta(x, |\nabla \overline{u}|)+1}{|\nabla \overline{u}|}\right )
\le \zeta^{\ast}\left (x, \dfrac{\zeta(x, |\nabla \overline{u}|)}{|\nabla \overline{u}|}+1\right )
\]
and rest is as in the previous case.    

Combining both cases we end up with 
\begin{align*}
\zeta^{\ast}(x, |\mathcal{A}_i(x,\nabla \overline{u}) - \mathcal{A}(x,\nabla \overline{u})|) \lesssim 1 + \phi(x, |\nabla \overline{u}|)^{1+\frac{\gamma}4} \in L^{1}(\Omega).
\end{align*}
Now the convergence follows from Lebesgue's dominated convergence theorem so $|I_2^{i}| \to 0$.

To estimate the last term we begin with the observation
\[
\phi^*\Big( x, |\mathcal{A}(x, \nabla \overline{u})| \Big) \le \phi^*\bigg( x,  c_2 \frac{\phi(x, |\nabla \overline{u}|)}{|\nabla \overline{u}|} \bigg ) \lesssim \phi(x, |\nabla \overline{u}|),
\]
and hence $|\mathcal{A}(x, \nabla \overline{u})| \in L^{\phi^{\ast}}(\Omega)$.
Since
\begin{align*}
I_3^{i} = -\dfrac{1}{\varepsilon} \int_{G' \cap \{|u_i-\overline{u}|<\varepsilon^{2}\}} \mathcal{A}(x, \nabla \overline{u}) \cdot (\nabla u_i - \nabla \overline{u}) \eta\, dx
\end{align*}
and  $\frac{\partial}{\partial x_j} u_i \rightharpoonup \frac{\partial}{\partial x_j} \overline{u}$  in $ L^{\phi^{1+\frac{\gamma}{4}}}(\Omega) \subset L^{\phi}(\Omega)$ for every $j=1, \ldots, n$, we see that $|I_3^{i}| < \varepsilon$ for $i$ large enough.

All in all
\begin{align*}
|E_i^{\varepsilon}| &\leq |E^{\varepsilon}_i \cap \{|u_i-\overline{u}|<\varepsilon^{2}\}| + |E^{\varepsilon}_i \cap \{|u_i-\overline{u}|\geq \varepsilon^{2}\}| \\
&\leq \dfrac{1}{\varepsilon} \int_{G'\cap \{|u_i-\overline{u}|<\varepsilon^2\}} (\mathcal{A}_i(x,\nabla u_i)-\mathcal{A}_i(x, \nabla \overline{u}))\cdot(\nabla u_i - \nabla \overline{u})\, dx \\
&\quad + |E_i^{\varepsilon}\cap \{|u_i-\overline{u}|\geq \varepsilon^2\}| \\
&= I_1^{i}+ I_2^{i} + I_3^{i}  + |E_i^{\varepsilon}\cap \{|u_i-\overline{u}|\geq \varepsilon^2\}| \\
&\le I_1^{i}+ |I_2^{i}| + |I_3^{i}|+ |E_i^{\varepsilon}\cap \{|u_i-\overline{u}|\geq \varepsilon^2\}| \\
&\lesssim  \varepsilon+ |\{|u_i-\overline{u}| \geq \varepsilon^{2}\}| \lesssim \varepsilon
\end{align*}
since $u_i \to \overline{u}$ almost everywhere and therefore also in measure. Thus
\begin{align}\label{eq:I_i(x)}
\lim_{i\to \infty} (\mathcal{A}_i(x, \nabla u_i)-\mathcal{A}_i(x, \nabla \overline{u})) \cdot (\nabla u_i - \nabla \overline{u}) = 0
\end{align}
for almost everywhere in $G$.

To simplify notation, let us denote 
$I_i(x) := (\mathcal{A}_i(x, \nabla u_i)-\mathcal{A}_i(x, \nabla \overline{u})) \cdot (\nabla u_i - \nabla \overline{u})$. 
Now for almost every $x_0 \in G$ we have
\begin{itemize}
\item $I_i(x_0) \to 0 $ as $i \to \infty$; 
\item $|\nabla \overline{u}(x_0)| < \infty$;
\item Structural conditions hold at $x_0$;
\item $\mathcal{A}_i(x_0,\xi) \to \mathcal{A}(x_0,\xi)$.
\end{itemize}
Now it cannot be that $|\nabla u_i(x_0)| \to \infty$ since then
\begin{align*}
c_1 \phi_i(x_0, |\nabla u_i(x_0)|) &\leq \mathcal{A}_i(x_0,\nabla u_i(x_0))\cdot \nabla u_i(x_0)\\
&= I_i(x_0) + \mathcal{A}_i(x_0, \nabla \overline{u}(x_0))\cdot \big(\nabla u_i(x_0) - \nabla \overline{u}(x_0)\big) \\
&\quad + \mathcal{A}_i(x_0,\nabla u_i(x_0))\cdot \nabla \overline{u}(x_0) \\
&\leq I_i(x_0) + c_2 \dfrac{\phi_i(x_0,|\nabla \overline{u}(x_0)|)}{|\nabla \overline{u}(x_0)|}\big(|\nabla u_i(x_0)| + |\nabla \overline{u}(x_0)| \big) \\
&\quad + c_2 \dfrac{\phi_i(x_0,|\nabla u_i(x_0)|)}{|\nabla u_i(x_0)|} |\nabla \overline{u}(x_0)|.
\end{align*}
But now the left hand side would grow like the $t\mapsto \phi_i(x,t)$ and the right hand side like 
$t \mapsto  t+\frac{\phi_i(x,t)}{t}$ which is a contradiction with \ainc{}. By monotonicity we see that if $\lim_{i \to \infty} \nabla u_i(x_0) =: \xi \not = \nabla \overline{u}(x_0)$, then
\begin{align*}
0 = \lim_{i \to \infty} I_i(x_0) = (\mathcal{A}(x_0,\xi) - \mathcal{A}(x_0, \nabla \overline{u}(x_0)) \cdot (\xi - \nabla \overline{u}(x_0)) >0,
\end{align*}
which is again a contradiction, so it has to be that $\nabla u_i \to \nabla \overline{u}$ almost everywhere and Claim 3 is proved.

\medskip\noindent \textbf{Claim 4:} \textit{$u_i \to \overline{u}$ in $W^{1,\phi^{1+\delta}}(\Omega)$ for every $\delta\in [0,\frac{\gamma}{4})$.}

Since $u_i \to \overline{u}$ strongly in $L^{\phi^{1+\frac{\gamma}{2}}}(\Omega)$, weakly in $W^{1,\phi^{1+\frac{\gamma}{4}}}(\Omega)$ and $|\nabla u_i| \to |\nabla \overline{u}|$ pointwise, we actually have $u_i \to \overline{u}$ strongly in $W^{1,\phi^{1+\delta}}(\Omega)$ for every $\delta<\frac{\gamma}{4}$. Indeed, defining $v_i:=\phi(x,|\overline{u}-u_i|)^{1+\delta} + \phi(x,|\nabla \overline{u}-\nabla u_i|)^{1+\delta}$ we see that $v_i \to 0$ almost everywhere in $\Omega$ and $v_i$ is equiintegrable in $L^{1}(\Omega)$, since the sequence is bounded in $L^{r}(\Omega)$, where $r=\frac{1+\gamma/4}{1+\delta}>1$. Now Vitali's convergence theorem \cite[p. 374]{Vitali} yields the claim.

\medskip\noindent \textbf{Claim 5:} \textit{$\overline{u}$ has $f$ as its boundary values in the Sobolev sense.} 

Since $f \in W^{1,\phi^{1+\frac{\gamma}{4}}}(\Omega)$, $u_i-f \in W^{1,\phi_i}_0(\Omega)$ and $(u_i)$ is bounded in $W^{1,\phi^{1+\frac{\gamma}{4}}}(\Omega)$ (Claim 2), it follows from \adec{}, largeness of $i$ and Claim 1 that
\begin{align*}
\int_{\Omega}\phi_i(x, |\nabla (u_i-f)|) \, dx \lesssim |\Omega| + \int_{\Omega} \phi(x,|\nabla u_i|)^{1+\frac{\gamma}{2}} \,dx + \int_{\Omega} \phi(x,|\nabla f|)^{1+\frac{\gamma}{2}} \, dx \leq M.
\end{align*}
 Lemma \ref{lemma:LM} shows that $\overline{u}-f \in W^{1,\phi}_0(\Omega)$ if we know that $\phi_i(x,t_i) \to \phi(x,t)$ whenever $t_i \to t$. 
To obtain this, first note that
\begin{align*}
|\phi_i(x,t_i) - \phi_i(x,t)| < \frac{\varepsilon}{2}
\end{align*}
when $i > i_1 \in \N$ since $(\phi_i)$ is uniformly equicontinuous. Additionally
\begin{align*}
|\phi_i(x,t) - \phi(x,t)| < \frac{\varepsilon}{2}
\end{align*}
when $i > i_2 \in \N$ since $\phi_i(x,t) \to \phi(x,t)$ by assumption in $\Omega$ for a fixed $t$. Therefore
\begin{align*}
|\phi_i(x,t_i) - \phi(x,t) | < |\phi_i(x,t_i) - \phi_i(x,t)| + |\phi_i(x,t) - \phi(x,t)|  < \varepsilon
\end{align*}
when $i > \max\{i_1, i_2\}$.

\medskip\noindent \textbf{Claim 6:} \textit{We have $\overline{u}=u$.} 

We can write $\overline{u}-u = (\overline{u}-f) + (f-u) \in W^{1,\phi}_0(\Omega)$. 
Note that $\overline{u} \in \mathcal{K}_{\psi}^{f,\phi}(\Omega)$, since $\overline{u} -f \in W^{1, \phi}_0(\Omega)$ by Claim 5, and  the pointwise convergence $u_i \to \overline{u}$ with $u_i \ge \psi$ yield $\overline{u} \ge \psi$.  So we can use 
$\overline{u}$ as a test function for the $(\mathcal{A},\mathcal{K}_{\psi}^{f,\phi}(\Omega))$-obstacle problem and obtain
\begin{align}\label{equ:ey1}
\int_{\Omega}\mathcal{A}(x, \nabla u) \cdot (\nabla \overline{u} - \nabla u) \, dx \geq 0.
\end{align}
To have a similar inequality
\begin{align}\label{equ:ey3}
\int_{\Omega}\mathcal{A}(x, \nabla \overline{u}) \cdot (\nabla u-\nabla \overline{u}) \, dx \geq 0,
\end{align}
we need some additional effort.

For any $w \in \mathcal{K}_{\psi}^{f,\phi^{1+\gamma}}(\Omega)$ we can assume that $w \in \mathcal{K}_{\psi}^{f,\phi_i}(\Omega)$ for $i$ large enough. Using $w$ as a test function for the obstacle problem we get
\begin{align*}
\int_{\Omega} \mathcal{A}_i(x, \nabla u_i) \cdot (\nabla w- \nabla u_i) \, dx \geq 0.
\end{align*}
We show by means of Vitali's convergence theorem \cite[p. 374]{Vitali} that letting $i\to \infty$ in the previous inequality yields
\begin{align*}
\int_{\Omega} \mathcal{A}(x, \nabla \overline{u}) \cdot (\nabla w- \nabla \overline{u}) \, dx  \geq 0.
\end{align*}
For this we need to show that integrands converge pointwise and that the sequence of integrals is absolutely continuous.

The pointwise convergence follows from
\begin{align*}
\mathcal{A}_i(x, \nabla u_i) \cdot (\nabla w - \nabla u_i) &=\mathcal{A}_i(x, \nabla u_i) \cdot (\nabla w - \nabla u_i) -\mathcal{A}_i(x,\nabla u_i)\cdot(\nabla \overline{u} - \nabla u_i) \nonumber\\
&\quad + \mathcal{A}_i(x,\nabla u_i)\cdot(\nabla \overline{u} - \nabla u_i)  - \mathcal{A}_i(x,\nabla \overline{u})\cdot(\nabla \overline{u} - \nabla u_i) \nonumber\\
&\quad + \mathcal{A}_i(x,\nabla \overline{u})\cdot(\nabla \overline{u} - \nabla u_i)  - \mathcal{A}(x,\nabla \overline{u})\cdot(\nabla \overline{u} - \nabla u_i)\nonumber \\
&\quad + \mathcal{A}(x,\nabla \overline{u})\cdot(\nabla \overline{u} - \nabla u_i)  - \mathcal{A}(x, \nabla \overline{u})\cdot(\nabla w - \nabla u_i) \nonumber\\
&\quad + \mathcal{A}(x, \nabla \overline{u})\cdot(\nabla w - \nabla u_i)\nonumber \\
&=\mathcal{A}_i(x,\nabla u_i) \cdot(\nabla w - \nabla \overline{u}) - I_i(x)  \\
&\quad +(\mathcal{A}_i(x,\nabla \overline{u})-\mathcal{A}(x,\nabla \overline{u}))\cdot(\nabla \overline{u} - \nabla u_i) \\
&\quad + \mathcal{A}(x,\nabla \overline{u})\cdot(\nabla \overline{u} - \nabla w) \nonumber\\
&\quad + \mathcal{A}(x,\nabla \overline{u})\cdot (\nabla w - \nabla u_i)\nonumber \\
&= \underbrace{\mathcal{A}_i(x,\nabla u_i) - \mathcal{A}(x,\nabla \overline{u}))\cdot(\nabla w - \nabla \overline{u})}_{(a)} - I_i(x)\nonumber \\
&\quad + \underbrace{(\mathcal{A}_i(x,\nabla \overline{u})-\mathcal{A}(x,\nabla \overline{u}))\cdot(\nabla \overline{u} - \nabla u_i)}_{(b)} \\
&\quad + \underbrace{\mathcal{A}(x,\nabla \overline{u})\cdot (\nabla w - \nabla u_i)}_{(c)}.\nonumber
\end{align*}

Let us next estimate the above terms:
\begin{itemize}
\item[(a)] First, we estimate
\begin{flalign*}
&&|\mathcal{A}_i(x, \nabla u_i) - \mathcal{A}(x,\nabla \overline{u})| \leq |\mathcal{A}_i(x,\nabla u_i) - \mathcal{A}_i(x, \nabla \overline{u})| + |\mathcal{A}_i(x, \nabla \overline{u}) - \mathcal{A}(x, \nabla \overline{u})|.
\end{flalign*}
Secondly, note that $|\mathcal{A}_i(x,\nabla u_i) - \mathcal{A}_i(x, \nabla \overline{u})|$ converges to zero by uniform equicontinuity of 
$(\mathcal{A}_i)$ together with pointwise convergence $\nabla u_i \to \nabla \overline{u}$ (Claim 3)
and $ |\mathcal{A}_i(x, \nabla \overline{u}) - \mathcal{A}(x, \nabla \overline{u})|$ converges to zero due to the pointwise convergence of $(\mathcal{A}_i)$. Therefore $(a)$ converges to 0.
\item[$I_i(x)$]We showed in the end of Claim 3 that $-I(x)$ converges to $0$ almost everywhere in  $G$. Since $G \Subset \Omega$ was arbitray, the pointwise convergence holds for almost every $x \in \Omega$.
\item[(b)] Since $\mathcal{A}_i(x, \nabla \overline{u}) \to \mathcal{A}(x, \nabla \overline{u})$ and $\nabla u_i \to \nabla \overline{u}$ almost everywhere by Claim 3, the term converges to 0.
\item[(c)] Since $\nabla u_i \to \nabla \overline{u}$ almost everywhere (Claim 3), this last term converges to $\mathcal{A}(x, \nabla \overline{u}) \cdot (\nabla w - \nabla \overline{u})$.
\end{itemize}

Next we show the sequence of integrals is absolutely continuous. 
Let $E \subset \Omega$. 
 Let us assume that $\theta < \frac{\gamma}{4}$, and also assume that $i \ge i_\theta$.
From the structural assumptions, uniform convergence, Young's inequality, and Hölder's inequality we get
\begin{align*}
&\Big |\int_{E}\mathcal{A}_i(x, \nabla u_i) \cdot (\nabla w - \nabla u_i)\, dx\Big | \lesssim \int_{\Omega }\phi_i(x, |\nabla u_i|) + \dfrac{\phi_i(x, |\nabla u_i|)}{|\nabla u_i|}|\nabla w|\, dx  \\
&\quad\lesssim \int_{E} \phi_i(x, |\nabla u_i|) + \phi_i^{\ast}\left (\dfrac{\phi_i(x, |\nabla u_i|)}{|\nabla u_i|}\right ) + \phi_i(x, |\nabla w|)\, dx\\
&\quad\lesssim \int_{E} \phi_i(x,|\nabla u_i|)  +\phi_i(x, |\nabla w|) \,dx\\
&\quad\lesssim \int_{E} \phi(x,|\nabla u_i|)^{1+ \theta} + \phi(x, |\nabla w|)^{1+ \theta} +2 \,dx\\
&\quad\lesssim |E|^{\frac1{\lambda'}} \Big(\int_{E} \phi(x,|\nabla u_i|)^{1+\frac{\gamma}{4}} \, dx\Big)^{\frac1{\lambda}}
+ |E|^{\frac1{\lambda'}} \Big(\int_{E} \phi(x,|\nabla w|)^{1+\frac{\gamma}{4}} \, dx\Big)^{\frac1{\lambda}} + |E|,
\end{align*}
where $\lambda>1$ is chosen so that $\lambda (1+ \theta) = 1+ \frac{\gamma}{4}$.
The first integral is uniformly bounded by Claim 1, the second integral is bounded since $w \in \mathcal{K}_{\psi}^{f,\phi^{1+\gamma}}(\Omega)$.
Hence $$\int_{E}\mathcal{A}_i(x, \nabla u_i) \cdot (\nabla w - \nabla u_i)\, dx \to 0$$ uniformly as $|E| \to 0$.
Thus we can use Vitali's convergence thereom to conclude that 
\begin{align}\label{equ:ey4}
\int_{\Omega} \mathcal{A}(x, \nabla \overline{u}) \cdot (\nabla w-\nabla \overline{u}) \, dx = \lim_{i\to \infty}\int_{\Omega}\mathcal{A}_i(x, \nabla u_i) \cdot (\nabla w - \nabla u_i) \, dx \geq 0
\end{align}
which is nearly in the form of \eqref{equ:ey3}, but we want to replace $w$ by $u$.

Since $u - f \in W^{1,\phi}_0(\Omega)$, by definition, there exists a sequence $(\eta_i) \subset C^{\infty}_0(\Omega)$ such that $\eta_i \to u-f$ in $W^{1,\phi}(\Omega)$.
As  $\eta$ has compact support and we can always assume that $f \geq \psi$, we deduce that $\max\{\eta_i, \psi-f\} \in W^{1,\phi^{1+\gamma}}_0(\Omega)$. 
This allows us to define functions $v_i := \max\{\eta_i, \psi-f\}+f$ and immediately conclude that $v_i \in \mathcal{K}_{\psi}^{f,\phi^{1+\gamma}}(\Omega)$.

We note that $u\geq \psi$ a.e. in $\Omega$ and so
\begin{align*}
v_i-u = \max\{\eta_i-(u-f), \psi-u\} \to 0
\end{align*}
in $W^{1,\phi}(\Omega)$. Therefore the gradients converge also in the weak sense and thus letting $w = v_i$ in \eqref{equ:ey4} we get
\begin{align*}
\int_{\Omega }\mathcal{A}(x, \nabla \overline{u}) \cdot (\nabla u - \nabla \overline{u}) \, dx = \lim_{i \to \infty} \int_{\Omega }\mathcal{A}(x, \nabla \overline{u}) \cdot (\nabla v_i - \nabla u) \, dx  \geq 0
\end{align*}
which is \eqref{equ:ey3}.

Finally, by the monotonicity of $\mathcal{A}$ and noting inequalities \eqref{equ:ey1} and \eqref{equ:ey3}, we see that
\begin{align*}
0 \leq \int_{\Omega}( \mathcal{A}(x, \nabla \overline{u}) - \mathcal{A}(x, \nabla u)) \cdot (\nabla \overline{u} - \nabla u) \, dx \leq 0,
\end{align*}
so by the structural assumption (4) it has to be that $\nabla \overline{u} = \nabla u$  almost everywhere. Function $u$ has boundary values $f$, and so does function $\overline{u}$ by Claim 5. Thus by Poincar\'e inequality we obtain
\[
\|u-\overline{u}\|_\phi \lesssim \|\nabla (u-\overline{u})\|_\phi=0 
\]
and hence $\overline{u} = u$ almost everywhere.

\medskip

We have proved that the sequence $(u_i)$ of 
solutions to \eqref{eq:equation} with operator $\mathcal{A}_i$ has a subsequence which converges to the solution $u$  to \eqref{eq:equation} with operator $\mathcal{A}$ in 
$W^{1, \phi^{1+\gamma}}(\Omega)$. 
\end{proof}

From the proof it can be seen that $\delta$ can be chosen as any positive number satisfying $\delta < \gamma/4$, see Claim~4.

Next we show that the subsequence converges also in some H\"older space.

\begin{proof}[Proof of Corollary \ref{cor:holder}]
Let us first recall that the solution $u_i$ to an $(\mathcal{A}_i,\mathcal{K}_{\psi}^{f,\phi_i}(\Omega))$-obstacle problem is locally bounded (see \cite[Proposition 5.3]{Kar21}). This bound and Claim 2 yield 
\begin{align*}
\esssup_{Q_R} u_i &\le k_0 + c R^{- \frac{q}{s p}} \bigg( \int_{Q_{2R}} \phi_i(x, \max\{u_i-k_0, 0\} )\, dx \bigg)^{\frac1p} \\
&\leq k_0 + c R^{- \frac{q}{s p}} \bigg( \int_{Q_{2R}} \phi_i(x, u_i )\, dx \bigg)^{\frac1p} \\
&\leq k_0 + cR^{-\frac{q}{s p}} \bigg( \int_{Q_{2R}} \phi(x, u_i )^{1+\gamma/2}+1 \, dx \bigg)^{\frac1p} \leq C,
\end{align*}
where $s := \frac{p}{nq-n(q-p)}$, $k_0 \geq \sup_{x \in Q_{2R}} \psi(x)$ and $Q_R$ is a cube with side length $R$, $Q_{6 R} \subset \Omega$ and $R \le R_0$. The second inequality follows since if $\sup_{x \in Q_{2R}} \psi(x) <0$, we choose $k_0=0$ and if $k_0=\sup_{x \in Q_{2R}} \psi(x)\geq 0$ the inequality is trivial.

Since $u_i$ is a solution to an obstacle problem, it is a supersolution and therefore a quasisuperminimizer. Therefore it follows that $-u$ is a quasisubminimizer and we obtain similar estimate for 
$\esssup_{Q_R} -u_i$  \cite[Theorem 4.7]{HarHT17}. Thus we have
\[
\|u_i\|_{L^{\infty}(Q_R)} \le C.
\]

Let $D \Subset \Omega$. Thus we can cover $D$ by finitely many cubes satisfying the above condition and obtain that each $u_i$ is bounded in $D$ and the $L^\infty(D)$-norm is bounded by  constant $c$ independent of $i$. 
By adding the constant $c$ 
to  solutions we get  non-negative solutions and hence also non-negative quasiminimizers.
Then by \cite[Theorem 2.6]{KarL21} the local  non-negative  solutions to $(\mathcal{A}_i,\mathcal{K}_{\psi}^{f,\phi_i}(\Omega))$-obstacle problems are locally H\"older continuous and the constants 
depend only on  constants in $n$, \ainc{p}, \adec{q},  \azero{} and  \aone{}, and $L^\infty_{\loc}$-norm of the solution.  
By assumptions, constants in  \ainc{p}, \adec{q}, \azero{} and  \aone{} are independent of $i$.

Combining above results we see that the sequence $(u_i+c)$ is equicontinuous. 
This yields that the sequence $(u_i+c)$ 
is bounded in $C^{0,\kappa}(D)$. If $\alpha < \kappa$, there is a compact embedding 
$C^{0,\kappa}(D) \hookrightarrow C^{0,\alpha}(D)$ so $u_i \to u$ in  $C_\loc^{0,\alpha}(\Omega)$.
\end{proof}


\bigskip

\noindent\small{
\textsc{P. Harjulehto}}\\
\small{Department of Mathematics and Statistics,
FI-00014 University of Helsinki, Finland}\\
\footnotesize{\texttt{petteri.harjulehto@helsinki.fi}}\\

\noindent\small{
\textsc{A. Karppinen}}\\
\small{Institute of Applied Mathematics and Mechanics, University of Warsaw, \\ ul. Banacha 2, 02-097, Warsaw, Poland}\\
\footnotesize{\texttt{a.karppinen@uw.edu.pl}}\\

\begin{thebibliography}{99}



\bibitem{BaaB22}
S.\ Baasandorj and S.-S.\ Byun:
Irregular obstacle problems for Orlicz double phase,
J. Math. Anal. Appl. 507 (2022), no. 1, Paper No. 125791, 21 pp.

\bibitem{BenK20}
A.\ Benyaiche and I.\ Khlifi:
Harnack inequality for quasilinear elliptic equations in generalized Orlicz-Sobolev spaces,
Potential Anal. 53 (2020), no. 2, 631--643.


\bibitem{ByuL21}
S.-S.\ Byun and  Lee, Ho-Sik:
Calderón-Zygmund estimates for elliptic double phase problems with variable exponents,
J. Math. Anal. Appl. 501 (2021), no. 1, Paper No. 124015, 31 pp.


\bibitem{Chl18}
I.\ Chlebicka:
A pocket guide to nonlinear differential equations in Musielak--Orlicz spaces,
{Nonlinear Anal.\ TMA}~\textbf{175} (2018), 1--27.


\bibitem{ChlK21}
I.\ Chlebicka and A.\ Karppinen:
Removable sets in elliptic equations with Musielak–Orlicz growth,
J. Math. Anal. Appl. 501 (2021), no. 1, article 124073.

\bibitem{ChlZ21}
I.\ Chlebicka and A.\ Zatorska-Goldstein:
Generalized Superharmonic Functions with Strongly Nonlinear Operator,
Potential Anal. (2021). DOI: 10.1007/s11118-021-09920-5

%
\bibitem{Cia04}
A.\ Cianchi:
Optimal Orlicz-Sobolev embeddings,
Rev. Mat. Iberoamericana 20 (2004), no. 2, 427--474.



\bibitem{DeFM19}
C.\ De Filippis and G.\ Mingione:
A borderline case of Calderón-Zygmund estimates for nonuniformly elliptic problems,
Algebra i Analiz 31 (2019), no. 3, 82--115; reprinted in
St. Petersburg Math. J. 31 (2020), no. 3, 455--477.


\bibitem{DieHHR11} 
L.\ Diening, P.\ Harjulehto, P.\ H\"ast\"o and M.\ R\r u\v zi\v cka:
{Lebesgue and Sobolev spaces with variable exponents}, 
Lecture Notes in Mathematics, 2017. Springer, Heidelberg, 2011.



\bibitem{CreGHW21}
\'A.\ Crespo-Blanco, L.\ Gasiński, P.\ Harjulehto, P.\ Winkert:
A new class of double phase variable exponent problems: Existence and uniqueness,  {J. Differential Equations}, accepted. 

\bibitem{EdmE87}
D. E.\ Edmunds and W.\ D.\ Evans:
{Spectral theory and differential operators},
2nd ed. Oxford: Oxford University Press, 2018. 

\bibitem{EleHL11}
M. Eleuteri, P. Harjulehto and T. Lukkari: Global regularity and stability of solutions to elliptic equations with nonstandard growth, Complex Var. Elliptic Equ. 56 (2011), no. 7-9, 599-622.

\bibitem{EleHL13}
M.\ Eleuteri, P.\ Harjulehto and T.\ Lukkari:
Global regularity and stability of solutions to obstacle problems with nonstandard growth, Rev. Mat. Complut. 26 (2013), 147--181.

\bibitem{FutS21}
T.\ Futamura and T.\ Shimomura:
The double obstacle problem for Musielak--Orlicz Dirichlet energy integral on metric measure spaces,
Tohoku Math. J. (2) 73 (2021), no. 1, 119--136.

\bibitem{Giusti} E.\ Giusti: 
{Direct Methods in the Calculus of Variations}, 
World Scientific, Singapore, 2003.


\bibitem{GwiSW10}
P.\ Gwiazda, A. \'Swierczewska-Gwiazda and A.\ Wr\'oblewska:
Monotonicity methods in generalized Orlicz spaces for a class of non-Newtonian fluids,
Math. Methods Appl. Sci. 33 (2010), no. 2, 125--137.


\bibitem{Haj99}
P.\ Haj\l{}az:
Pointwise Hardy inequalities, Proc. Am. Math. Soc., Volume 127 (1999), no. 2, 417--423.


\bibitem{HarH_book}
P.\ Harjulehto and P.\ H\"ast\"o:
Orlicz Spaces and Generalized Orlicz Spaces, Lecture Notes in Mathematics, vol. 2236, Springer, Cham, 2019. DOI: 10.1007/978-3-030-15100-3

\bibitem{HarHJ_pp}
P.\ Harjulehto, P.\ H\"ast\"o and J.\ Juusti:
Revisiting basic assumptions of generalized Orlicz spaces,
in preparation.

\bibitem{HarHK17}
P.\ Harjulehto, P.\ H\"ast\"o and A. Karppinen:
Local higher integrability of the gradient of a quasiminimizer under generalized Orlicz growth conditions, Nonlinear Anal. Volume 177, Part B, December 2018, Pages 543--552.


\bibitem{HarHT17}
P.\ Harjulehto, P.\ H\"ast\"o and O.\ Toivanen: 
Hölder regularity of quasiminimizers under generalized growth conditions, Calc. Var. Partial Differential Equations 56 (2017), no. 2, article:22.

\bibitem{Has15}
P.\ H\"ast\"o: 
The maximal operator on generalized Orlicz spaces, 
J. Funct. Anal 269 (2015), no. 12, 4038--4048.


\bibitem{Has16}
P.\ H\"ast\"o: 
Corrigendum to ``The maximal operator on generalized Orlicz spaces'' [J. Funct. Anal. 269 (2015) 4038--4048], 
J. Funct. Anal. 271 (2016), no. 1, 240--243. 


\bibitem{HasO_pp18}
P.\ H\"ast\"o and J.\ Ok: 
Maximal regularity for non-autonomous functionals, 
{J. Eur. Math. Soc.} 24 (2022), no. 4, 1285--1334


\bibitem{Juu}
J. Juusti: Extension in generalized Orlicz-Sobolev spaces, in preparation.

\bibitem{Vitali}
V.\ Kadets:
A course in functional analysis and measure theory, Universitext. Springer, Cham, 2018.

\bibitem{Kar21}
A.\ Karppinen:
Global continuity and higher integrability of a minimizer of an obstacle problem under generalized Orlicz growth conditions, Manuscripta Math., 164, 67–94 (2021). 

\bibitem{KarL21}
A.\ Karppinen and M. Lee:
H\"older Continuity of the Minimizer of an Obstacle Problem with Generalized Orlicz Growth,
Int. Math. Res. Not. IMRN, to appear.



\bibitem{LiM98}
G.~Li and O.~Martio:Stability of solutions of varying degenerate elliptic equations.
{ Indiana Univ. Math. J.}~{47} (1998), no.~3, 873--891.

\bibitem{Lie91}
G.\ Lieberman:
The natural generalization of the natural conditions of Ladyzhenskaya and Ural\'{}tseva for elliptic equations,
Comm. Partial Differential Equations 16 (1991), no. 2-3, 311--361.

\bibitem{Lin87}
P.\ Lindqvist: Stability for the solutions of 
{${\rm div}\,(|\nabla u|^{p-2}\nabla u)=f$} with varying {$p$},
{J. Math. Anal. Appl.}~127 (1987), 93--102.

\bibitem{NasC22}
A.\ Nastasi and C. P. Camacho:
Higher integrability and stability of $(p,\ q)$-quasiminimizers,
preprint (2022), arXiv:2203.07699.

\bibitem{RagT20}
M.A.\ Ragusa and A.\ Tachikawa: Regularity for minimizers for functionals of double phase with
variable exponents, 
Adv. Nonlinear Anal. 9 (2020), no. 1, 710--728.

\end{thebibliography}
\end{document}